\newif\iftikziii
\title{Some open questions on arithmetic Zariski pairs}
\author[E. Artal]{Enrique Artal Bartolo}
\address{Departamento de Matem\'aticas, IUMA\\ 
Universidad de Zaragoza\\ 
C.~Pedro Cerbuna 12\\ 
50009 Zaragoza, Spain} 
\email{artal@unizar.es} 
\author[J.I. Cogolludo]{Jos{\'e} Ignacio Cogolludo-Agust{\'i}n}
\address{Departamento de Matem\'aticas, IUMA\\ 
Universidad de Zaragoza\\ 
C.~Pedro Cerbuna 12\\ 
50009 Zaragoza, Spain} 
\email{jicogo@unizar.es}
\thanks{Partially supported by
MTM2013-45710-C2-1-P}  
\subjclass[2010]{14N20, 32S22, 14F35, 14H50, 14F45, 14G32}  
\keywords{Zariski pairs, number fields, fundamental group}
\begin{document}

\begin{abstract}
In this paper, complement-equivalent arithmetic Zariski pairs will be exhibited answering in the negative a 
question by Eyral-Oka~\cite{Oka-Eyral-fundamental-groups} on these curves and their groups.
A complement-equivalent arithmetic Zariski pair is a pair of complex projective plane curves having Galois-conjugate equations in 
some number field whose complements are homeomorphic, but whose embeddings in $\PP^2$ are not.

Most of the known invariants used to detect Zariski pairs depend on the \'etale fundamental group. In the case
of Galois-conjugate curves, their \'etale fundamental groups coincide. 
Braid monodromy factorization 
appears to be sensitive to the difference between \'etale fundamental 
groups and homeomorphism class of embeddings.
\end{abstract}

\maketitle

\section*{Introduction}

In this work some open questions regarding Galois-conjugated curves and arithmetic Zariski pairs will be 
answered and some new questions will be posed. The techniques used here combine braid monodromy calculations,
group theory, representation theory, and the special real structure of Galois-conjugated curves. 

A Zariski pair~\cite{ea:jag} is a pair of plane algebraic curves 
$\mathcal{C}_1,\mathcal{C}_2\in\mathbb{P}^2\equiv\mathbb{C}\mathbb{P}^2$ whose embeddings in their regular 
neighborhoods are homeomorphic $(T(\mathcal{C}_1),\mathcal{C}_1)\cong (T(\mathcal{C}_2),\mathcal{C}_2)$
but their embeddings in $\mathbb{P}^2$ are not $(\mathbb{P}^2,\mathcal{C}_1)\not\cong (\mathbb{P}^2,\mathcal{C}_2)$. 
The first condition is given by a discrete set of invariants which we refer to as \emph{purely combinatorial} in 
the following sense. The \emph{combinatorics} of a curve $\mathcal{C}$ with irreducible components 
$\mathcal{C}^1,\dots,\mathcal{C}^r$ is defined by the following data:
\begin{enumerate}
\enet{($\mathcal{C}$\arabic{enumi})}
\item The degrees $d_1,\dots,d_r$ of  $\mathcal{C}^1,\dots,\mathcal{C}^r$.
\item The topological types $\mathcal{T}_1,\dots,\mathcal{T}_s$ of the singular points
$P_1,\dots,P_s\in\sing(\mathcal{C})$.
\item Each $\mathcal{T}_i$ is determined by the topological types $\mathcal{T}_i^1,\dots,\mathcal{T}_i^{n_i}$
of its local irreducible branches $\delta_i^1,\dots,\delta_i^{n_i}$ and by the local intersection numbers 
$(\delta^j_i,\delta^k_i)_{P_i}$ of each pair of irreducible branches.
The final data for the definition of the combinatorics is the assignment of its global irreducible component
for each local branch~$\delta_i^j$.
\end{enumerate}
The first Zariski pair was found by O.~Zariski in \cite{zr:29,zr:37} and it can be described as  
a pair of irreducible sextics with six ordinary cusps. This example has two main features. On the one hand, the 
embeddings of the curves in $\PP^2$ are not homeomorphic because their complements are not. On the other hand,
one of the curves of the pair satisfies a nice global algebraic property (which is not part of its combinatorics): 
its six singular points lie on a conic. The first fact can be proved directly by showing that the fundamental 
groups of their complements are not isomorphic. Also, using \cite{zr:31} it is possible to prove this by means of
a weaker, but more tractable, invariant which was later called the Alexander polynomial of the curve by 
Libgober~\cite{li:82} which is sensitive to global aspects such as the position of the singularities. The second
feature 
is the fact that one of the sextics is a curve of \emph{torus type} i.e. a curve whose equation is of the form 
$f_2^3+f_3^2=0$, where $f_j$ is a homogeneous polynomial of degree~$j$.

Since then, many examples of Zariski pairs (and tuples) have been found by many authors, including 
J.~Carmona, A.~Degtyarev, M.~Marco, M.~Oka, G.~Rybnikov, I.~Shimada, H.~Tokunaga, and the authors, 
(see~\cite{act:08} for precise references).

By the work of Degtyarev~\cite{deg:90} and Namba~\cite{nmb:86}, Zariski pairs can appear only in degree at least~$6$, 
and this is why the literature of Zariski pairs of sextics is quite extensive. Given a pair of curves, it is usually 
easy to check that they have the same combinatorics. What is usually harder is to prove is whether or not they are 
homeomorphic. Note that two curves which admit an equisingular deformation are topologically equivalent and this is 
why the first step to check whether a given combinatorics may admit Zariski pairs is to find the connected components 
of the space of realizations of the combinatorics. Namely, given a pair of curves with the same combinatorics, a 
necessary condition for them to be a Zariski pair is that they are not connected by an equisingular deformation,
in the language of Degtyarev, they are not \emph{rigidly isotopic}. 

Most of the effective topological invariants used in the literature to prove that a pair of curves is a Zariski pair 
can be reinterpreted in algebraic terms, in other words, they only depend on the algebraic (or \'etale) fudamental 
group, defined as the inverse limit of the system of subgroups of finite index of a fundamental group.
This is why, in some sense when it comes to Zariski pairs, the most difficult candidates to deal with are those of 
an arithmetic nature, i.e. curves $\mathcal{C}_1,\mathcal{C}_2$ whose equations have coefficients in some number 
field~$\mathbb{Q}(\xi)$ and they are Galois conjugate. Note that Galois-conjugate curves have the same \'etale 
fudamental group.

There are many examples of pairs of Galois-conjugate sextic curves which are not rigidly isotopic. The first example 
of an arithmetic Zariski pair was found in~\cite{acc:01b} in degree 12 and it was built up from a pair of 
Galois-conjugate sextics (see also~\cite{Abelson-conjugate,kulikov-kharlamov,se:64} for similar examples on compact surfaces).

In another direction, the equivalence class of embeddings, i.e. the homeomorphism class of pairs $(\PP^2,\cC)$,
can be refined by allowing only homeomorphisms that are holomorphic at neighborhoods of the singular points of the
curve (called \emph{regular} by Degtyarev~\cite{degt:08}). Also, one can allow only homeomorphisms that can be 
extended to the exceptional divisors on a resolution of singularities. Curves that have the same combinatorics and
belong in different classes are called \emph{almost}-Zariski pairs in the first case and \textsc{nc}-Zariski
pairs in the second case.

Interesting results concern these other Zariski pairs, for instance Degtyarev proved in~\cite{degt:08} that sextics 
with simple singular points and not rigidly isotopic are \emph{almost} Zariski pairs, and among them there are plenty 
of arithmetic pairs. 

Shimada developed in~\cite{shi:08,shi:09} an invariant denoted $N_C$ which is a topological invariant of the 
embedding, but not of the complements. He found the first examples of arithmetic Zariski pairs for sextics.
None of these examples is of torus type.

In \cite{Oka-Eyral-fundamental-groups}, Eyral and Oka study a pair of Galois-conjugated curves of torus type.
They were able to find presentations of the fundamental groups of their complements and was conjectured that these 
groups are not isomorphic, in particular this would produce an arithmetic Zariski pair. The invariant used by 
Shimada to find arithmetic Zariski pairs of sextics does not distinguish Eyral-Oka curves. Also, 
Degtyarev~\cite{MR2513586} proposed alternative methods to attack the problem, but it is still open as originally 
posed by Eyral-Oka.

This paper answers some questions on the Eyral-Oka example. The first part of the conjecture is solved in the 
negative by proving that the fundamental groups of both curves are in fact isomorphic. The question about them being 
an arithmetic Zariski pair remains open but, using the techniques in~\cite{act:08}, several arithmetic Zariski pairs 
can be exhibited by adding lines to the original curves. It is right hence to conjecture that they form an arithmetic 
Zariski pair themselves. Moreover, some of these Zariski pairs are \emph{complement-equivalent Zariski pairs}, 
(cf.~\cite{act:08}) that is, their complements are homeomorphic (actually analytically and algebraically isomorphic 
in this case) but no homeomorphism of the complements extends to the curves.

Also, a very relevant fact about these curves that makes computations of braid monodromies, and hence fundamental 
groups, very effective from a theoretical point of view is that they are not only real curves, but 
\emph{strongly real curves}, that is, their singular points are also real plus the real picture and the combinatorics
are enough to describe the embedding. Some of the special techniques used for strongly real curves were outlined 
in~\cite{acct:01}. In this work we will describe them in more detail.

The paper is organized as follows. In Section~\ref{sec-construction} the projective Eyral-Oka curves will be constructed. 
Their main properties are described and one of the main results of this paper is proved: after adding a line to the 
projective Eyral-Oka curve we obtain the affine Eyral-Oka curve and we show that their complements are homeomorphic
in \autoref{prop-iso}. Section~\ref{sec-okaeyral} is devoted to giving a description of the braid monodromy factorization
of the affine Eyral-Oka curves as well as a theoretical description of their fundamental groups in \autoref{thm-prin}
which allow us to show that the fundamental groups of the projective Eyral-Oka curves are isomorphic in
\autoref{cor-prin}. Finally, in Section~\ref{sec-invariant} we define a new invariant of the embedding of fibered 
curves and use it to produce examples of complement-equivalent arithmetic Zariski pairs in \autoref{thm-Zar-pair}.

\section{Construction of Eyral-Oka curves}\label{sec-construction}

In \cite{Oka-Eyral-fundamental-groups}
M.~Oka and C.~Eyral proposed a candidate for an arithmetic Zariski pair of sextics.
This candidate is the first one formed by curves of torus type, i.e. which can be written as 
$f_2^3+f_3^2=0$, for $f_j$ a homogeneous polynomial in $\CC[x,y,z]$ of degree~$j$.

Eyral-Oka curves are irreducible, they have degree 6, and their singularities are given by: two points of type 
$\mathbb{E}_6$, one $\mathbb{A}_5$, and one $\mathbb{A}_2$. The equisingular stratum of such curves is described 
in~\cite{Oka-Eyral-fundamental-groups}, however for the sake of completeness, an explicit construction of this
space will be provided here. In particular, this realization space has two connected components of dimension 0 up 
to projective transformation. 

To begin with proving the basic properties of these curves, let us fix a sextic curve~$\mathcal{C}: f(x,y,z)=0$ 
with the above set of singularities.

\begin{lema}\label{lema:irr}
The curve~$\mathcal{C}$ is rational and irreducible.
\end{lema}

\begin{proof}
Recall that $\mathbb{E}_6$ singularities have a local equation of the form $x^3+y^4$ for a choice of generators
of the local ring $\cO_{\CC^2,0}$. In particular it is an irreducible singularity whose $\delta$-invariant is 3
and thus it can only be present in an irreducible curve of degree at least~4. Since the total degree of $\cC$ is
6, this implies that both $\mathbb{E}_6$ singularities have to be on the same irreducible component. Again, by a
genus argument, the irreducible component containing both $\mathbb{E}_6$ singularities needs to have degree at
least 5, but then the existence of the irreducible singularity $\mathbb{A}_2$ implies that $\cC$ cannot be a quintic 
and a line (note also that no quintic with two $\mathbb{E}_6$ singularities exists, because
of Bezout's Theorem). Hence, if it exists, it has to be irreducible. Also note that the total $\delta$-invariant of the 
singular locus $2\mathbb{E}_6+\mathbb{A}_5+\mathbb{A}_2$ is 10, which implies that the sextic has to be rational.
\end{proof}

We are going to prove now that $\mathcal{C}$ is of torus type. Using an extension of the 
de~Franchis method~\cite{defranchis-sulle} to rational pencils (see~\cite{ji-libgober-mw,tokunaga-torus-albanese}), 
it follows that $\mathcal{C}$ is of torus type if and only if the cyclotomic polynomial of order 6,
$\varphi_6(t)=t^2-t+1$, divides the Alexander polynomial $\Delta_\cC(t)$ of the curve~$\cC$. Moreover, a torus 
decomposition is unique (up to scalar multiplication) if, in addition, the multiplicity of $\varphi_6(t)$ in 
$\Delta_\cC(t)$ is exactly~1. 
The Alexander polynomial of a curve $V(F)=\{F(x,y,z)=0\}$ was introduced by~Libgober in~\cite{li:82}. 
It can be interpreted as the characteristic polynomial of the monodromy action on the first homology group of 
the cyclic covering of $\mathbb{P}^2\setminus V(F)$ defined as the affine surface $t^d-F(x,y,z)=0$ 
in~$\mathbb{C}^3$. Following the notation in \cite{ea:jag} (see also~\cite{li:82,es:82,lv:90}) and ideas 
coming back from Zariski~\cite{zr:31}, the Alexander polynomial can be computed as follows.

\begin{prop}[{\cite[Proposition~2.10]{ea:jag}}]\label{prop:cok}
Let $V(F)$ be a reduced curve of degree~$d$. All the roots of the Alexander polynomial $\Delta_F(t)$ of~$V(F)$ are 
$d$-th roots of unity. Let $\zeta_d^k:=\exp(\frac{2 i \pi k}{d})$. Then the multiplicity of $\zeta_d^k$ 
as a root of $\Delta_F(t)$ equals the number $d_k+d_{d-k}$, where $d_k$ is the dimension of the cokernel 
of the natural map
$$
\rho_k:H^0(\mathbb{P}^2;\mathcal{O}_{\PP^2}(k-3))\to\bigoplus_{P\in\sing(V(F))} 
\frac{\mathcal{O}_{\mathbb{P}^2,P}}{\mathcal{J}_{P,d,k}},
$$
where $\mathcal{J}_{P,d,k}\subset\mathcal{O}_{\mathbb{P}^2,P}$ is an ideal which depends on the germ of $V(F)$ at 
$P\in\sing(V(F))$ and $\frac{k}{d}$. 
\end{prop}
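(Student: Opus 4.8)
The plan is to recover $\Delta_F(t)$ as the characteristic polynomial of the monodromy on the first cohomology of a cyclic cover, and then to evaluate that cohomology by a residue/adjunction computation on $\PP^2$; this is the circle of ideas of Zariski, Libgober, Esnault and Loeser--Vaqui\'e, of which the proposition is essentially the bookkeeping. Concretely, let $M=\{F=1\}\subset\CC^3$ be the Milnor fibre of the cone over $V(F)$ — equivalently, the $d$-fold cyclic cover of $\PP^2$ unramified over $\PP^2\setminus V(F)$, the surface $t^d=F$ of the preamble — with its deck transformation $\sigma\colon(x,y,z)\mapsto(\zeta_d x,\zeta_d y,\zeta_d z)$ of order $d$. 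By Libgober's description, $\Delta_F(t)$ is, up to a unit and a power of $t-1$, the characteristic polynomial of $\sigma^{*}$ acting on $H^1(M;\CC)$. Since $\sigma$ has order $d$, the operator $\sigma^{*}$ is semisimple with eigenvalues among the $\zeta_d^k$, which is already the first assertion; moreover the multiplicity of $\zeta_d^k$ with $k\not\equiv 0\pmod d$ equals $\dim_\CC H^1(M;\CC)_{\zeta_d^k}$, the dimension of the $\zeta_d^k$-eigenspace (the trivial eigenvalue only produces the $t-1$ factors, in accordance with $H^0(\PP^2,\mathcal{O}_{\PP^2}(-3))=0$, which forces $d_0=0$).

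So the task reduces to computing each eigenspace dimension. The eigenspace $H^1(M;\CC)_{\zeta_d^k}$ inherits the $\sigma$-equivariant mixed Hodge structure on $H^1(M)$; for $k\not\equiv 0$ the part that enters the Alexander polynomial is of weight one, hence splits into two Hodge summands interchanged by complex conjugation, and since $\sigma^{*}$ is defined over $\RR$ conjugation carries the $\zeta_d^k$-eigenspace onto the $\zeta_d^{d-k}$-eigenspace. Therefore $\dim H^1(M;\CC)_{\zeta_d^k}=d_k+d_{d-k}$, where $d_k$ denotes the dimension of the Hodge summand ``on the $k$-side'', and the proof is finished once $d_k$ is identified with $\dim\operatorname{coker}\rho_k$.

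For that identification one pushes the holomorphic forms making up the $k$-side summand down to $\PP^2$, through $\sigma$ and through an embedded resolution $\mu\colon X\to\PP^2$ of $V(F)$. A residue computation along the exceptional divisors identifies this space with $H^1\bigl(\PP^2,\mathcal{O}_{\PP^2}(k-3)\otimes\mathcal{J}_{d,k}\bigr)$, where $\mathcal{J}_{d,k}=\bigcap_{P}\mathcal{J}_{P,d,k}$ and $\mathcal{J}_{P,d,k}\subset\mathcal{O}_{\PP^2,P}$ is the \emph{quasi-adjunction} (multiplier-type) ideal attached to the germ of $V(F)$ at $P$ and to the fraction $k/d$, namely the localization at $P$ of $\mu_*\mathcal{O}_X\bigl(K_{X/\PP^2}-\lfloor\tfrac{k}{d}\,\mu^{*}V(F)\rfloor\bigr)$ (with the customary $\epsilon$-shift of the coefficient); it depends only on the germ and on $k/d$, and equals the whole local ring unless $k/d$ reaches a jumping number of the germ. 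Finally, from the exact sequence
\[
0\longrightarrow\mathcal{J}_{d,k}(k-3)\longrightarrow\mathcal{O}_{\PP^2}(k-3)\longrightarrow\bigoplus_{P\in\sing(V(F))}\frac{\mathcal{O}_{\PP^2,P}}{\mathcal{J}_{P,d,k}}\longrightarrow 0
\]
together with the vanishing $H^1(\PP^2,\mathcal{O}_{\PP^2}(k-3))=0$ one reads off $H^1(\PP^2,\mathcal{J}_{d,k}(k-3))=\operatorname{coker}\rho_k$; hence $d_k=\dim\operatorname{coker}\rho_k$ and the multiplicity of $\zeta_d^k$ in $\Delta_F(t)$ is $d_k+d_{d-k}$, as asserted.

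The main obstacle is the residue/adjunction step of the previous paragraph: giving an intrinsic definition of $\mathcal{J}_{P,d,k}$ from an embedded resolution and proving that the $\zeta_d^k$-isotypic holomorphic forms on the cover correspond exactly to the sections of $\mathcal{O}_{\PP^2}(k-3)$ twisted by $\mathcal{J}_{d,k}$. This requires careful control of the discrepancies and of the round-downs of the fractional multiplicities $\tfrac{k}{d}m_i$ along the exceptional divisors, together with a consistent normalisation of the contribution of the line at infinity so that the resulting polynomial agrees with Libgober's Alexander polynomial of $V(F)$; everything else — the eigenspace decomposition, the weight-one purity and the conjugation symmetry, and the cohomology sequence on $\PP^2$ — is then formal.
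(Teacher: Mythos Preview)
The paper does not prove this proposition: it is quoted verbatim as \cite[Proposition~2.10]{ea:jag}, and the surrounding text merely points to the circle of references \cite{li:82,es:82,lv:90,zr:31} where the underlying ideas originate. So there is no ``paper's own proof'' to compare against; your sketch is in fact an outline of the standard argument that those references carry out, and is consistent with the explicit description of $\mathcal{J}_{P,d,k}$ via an embedded resolution that the paper records immediately after the statement.

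Your outline is correct in its architecture: realise $\Delta_F$ as the characteristic polynomial of the monodromy on $H^1$ of the Milnor fibre $\{F=1\}$, use the order-$d$ semisimplicity to get $d$-th roots of unity, split each eigenspace by the Hodge filtration to obtain the $d_k+d_{d-k}$ shape, and identify the holomorphic piece with $H^1(\PP^2,\mathcal{J}_{d,k}(k-3))=\operatorname{coker}\rho_k$ via the short exact sequence of sheaves. One point to tighten: the assertion that the relevant part of $H^1(M)$ is pure of weight one is not automatic for an open surface and is exactly what Esnault and Loeser--Vaqui\'e establish (or, in Libgober's approach, what is bypassed by working with the associated branched cover and Serre duality); you should cite this rather than treat it as ``formal''. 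With that caveat, and with the residue/adjunction step you already flag as the main obstacle, your proposal matches the argument of \cite{ea:jag} and its antecedents.
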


\begin{obs}
\label{rem-superabundance}
Note that $d_k$ can also be described as
\begin{equation}
\label{eq-superabundance}
d_k=\sum_{P\in \sing(V(F))} \dim \frac{\mathcal{O}_{\mathbb{P}^2,P}}{\mathcal{J}_{P,d,k}}
-{\binom{k-1}{2}}+\dim \ker \rho_k.
\end{equation}
In fact, $\ker \rho_k=H^0(\PP^2;\mathcal{J}_{d,k}(k-3))$ the global sections of an ideal sheaf supported on 
$\sing(V(F))$ whose stalk at $P$ is $\mathcal{J}_{P,d,k}$. Curves in this ideal sheaf will be said to 
\emph{pass through} the ideal $\mathcal{J}_{P,d,k}$ for all $P\in \sing(V(F))$ or simply \emph{pass through}
$\mathcal{J}_{d,k}$.
\end{obs}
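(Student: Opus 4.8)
The plan is to obtain \eqref{eq-superabundance} by applying the rank--nullity theorem to the $\CC$--linear map $\rho_k$ of \autoref{prop:cok}, and to read off the description of $\ker\rho_k$ directly from the definition of the ideal sheaf $\mathcal{J}_{d,k}$. First I would check that every vector space in sight is finite dimensional: each stalk quotient $\mathcal{O}_{\mathbb{P}^2,P}/\mathcal{J}_{P,d,k}$ is finite dimensional (the ideal $\mathcal{J}_{P,d,k}$ is $\mathfrak{m}_{P}$--primary, which is already implicit in the fact that its colength enters the statement of \autoref{prop:cok}), and the direct sum is finite because $\sing(V(F))$ is finite; while the source $H^0(\mathbb{P}^2;\mathcal{O}_{\mathbb{P}^2}(k-3))$ is the space of degree $k-3$ homogeneous polynomials in three variables, of dimension $\binom{k-1}{2}$ for $k\ge 3$, and $0$ for $k\le 2$ (where $\binom{k-1}{2}=0$ as well, so the formula is uniform).

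Next, since $d_k=\dim\operatorname{coker}\rho_k$ by definition, rank--nullity gives
\[
d_k=\dim\Bigl(\bigoplus_{P\in\sing(V(F))}\frac{\mathcal{O}_{\mathbb{P}^2,P}}{\mathcal{J}_{P,d,k}}\Bigr)-\dim H^0(\mathbb{P}^2;\mathcal{O}_{\mathbb{P}^2}(k-3))+\dim\ker\rho_k,
\]
and substituting the two dimension counts from the previous paragraph yields exactly \eqref{eq-superabundance}.

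For the identification $\ker\rho_k=H^0(\mathbb{P}^2;\mathcal{J}_{d,k}(k-3))$, I would note that $\mathcal{J}_{d,k}$ is, by construction, the subsheaf of $\mathcal{O}_{\mathbb{P}^2}$ whose stalk is $\mathcal{J}_{P,d,k}$ at each $P\in\sing(V(F))$ and $\mathcal{O}_{\mathbb{P}^2}$ elsewhere; hence a global section $g\in H^0(\mathbb{P}^2;\mathcal{O}_{\mathbb{P}^2}(k-3))$ lies in $H^0(\mathbb{P}^2;\mathcal{J}_{d,k}(k-3))$ precisely when its germ at each singular point belongs to $\mathcal{J}_{P,d,k}$, which is exactly the condition $\rho_k(g)=0$. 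Therefore the two subspaces of $H^0(\mathbb{P}^2;\mathcal{O}_{\mathbb{P}^2}(k-3))$ coincide, as claimed.

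There is no genuine obstacle here: the remark is a formal consequence of \autoref{prop:cok}, the rank--nullity theorem, and the definition of $\mathcal{J}_{d,k}$. The only points deserving a word of care are the finiteness of $\dim\mathcal{O}_{\mathbb{P}^2,P}/\mathcal{J}_{P,d,k}$ --- needed for \eqref{eq-superabundance} to make sense at all --- and the low--degree range $k\le 2$, where one should confirm that the binomial convention $\binom{k-1}{2}=0$ matches $H^0(\mathbb{P}^2;\mathcal{O}_{\mathbb{P}^2}(k-3))=0$ so that the identity remains valid (if vacuous) there as well.
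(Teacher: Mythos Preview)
Your proposal is correct. The paper does not actually supply a proof of this observation: it is stated as a remark immediately after \autoref{prop:cok} and treated as self-evident, since it is nothing more than the rank--nullity identity applied to the linear map~$\rho_k$ together with the tautological description of its kernel as the global sections of the ideal sheaf $\mathcal{J}_{d,k}(k-3)$. Your write-up spells out exactly these two points, so there is nothing to compare.
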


We can be more precise in the description of the ideal $\mathcal{J}_{P,d,k}$ by means of an embedded resolution
$\sigma:\hat{X}\to\mathbb{P}^2$ of the point $P$ as a singular point of~$V(F)$.  
Assume for simplicity that $P=[0:0:1]$ and let $E_1^P,\dots,E_n^P$ be the exceptional divisors over~$P$. 
Let $N_i$ be the multiplicity of $\sigma^*(F(x,y,1))$ along $E_i^P$ and let $\nu_i-1$ be the multiplicity of 
$\sigma^*(d x\wedge dy)$ along $E_i^P$. Then,
\begin{equation*}
\mathcal{J}_{P,d,k}:=\left\{h\in\mathcal{O}_{\mathbb{P}^2,P}\left\vert
\text{ the multiplicity of }\sigma^*h\text{ along }E_i\text{ is }>
\left\lfloor\frac{k N_i}{d}\right\rfloor -\nu_i 
\right.\right\}.
\end{equation*}

It is an easy exercise to compute these ideals for the singular points of~$\mathcal{C}$.

\begin{lema}\label{lema:J}
Let $\m_P$ be the maximal ideal of $\mathcal{O}_{\mathbb{P}^2,P}$ and let $\ell_P$ be the local equation of the 
tangent line of $\mathcal{C}$ at $P$. Then, the ideal $\mathcal{J}_{P,6,5}\subset\mathcal{O}_{\mathbb{P}^2,P}$ equals
\begin{enumerate}
\enet{\rm(\arabic{enumi})}
 \item $\m_P$ if $P$ is an $\mathbb{A}_2$-point,
 \item $\langle\ell_P\rangle+\m_P^2$ if $P$ is either an $\mathbb{A}_5$-point or an $\mathbb{E}_6$-point,
\end{enumerate}
whereas $\mathcal{J}_{P,6,1}=\mathcal{O}_{\mathbb{P}^2,P}$ at any singular point~$P$.
\end{lema}

\begin{prop}
The multiplicity $m$ of $\varphi_6(t)$ as a factor of $\Delta_\mathcal{C}(t)$ equals~$1$.
In particular $\mathcal{C}$ admits exactly one torus decomposition.
\end{prop}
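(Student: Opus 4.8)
The plan is to feed \autoref{lema:J} into \autoref{prop:cok}. Since $d=6$, the primitive sixth roots of unity are $\zeta_6^1$ and $\zeta_6^5=\overline{\zeta_6^1}$, and $\varphi_6(t)=(t-\zeta_6^1)(t-\zeta_6^5)$; by \autoref{prop:cok} each of them is a root of $\Delta_\cC(t)$ with the same multiplicity $d_1+d_5$, so $m=d_1+d_5$. The term $d_1$ vanishes at once: $H^0(\PP^2;\cO_{\PP^2}(1-3))=H^0(\PP^2;\cO_{\PP^2}(-2))=0$ and, by \autoref{lema:J}, $\mathcal{J}_{P,6,1}=\cO_{\PP^2,P}$ at every singular point, so both source and target of $\rho_1$ vanish and $d_1=0$. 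Everything thus reduces to proving $d_5=1$.

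For $d_5$ I would use the reformulation \eqref{eq-superabundance} with $k=5$. By \autoref{lema:J} the local term $\dim \cO_{\PP^2,P}/\mathcal{J}_{P,6,5}$ is $1$ at the $\mathbb{A}_2$-point and $2$ at the $\mathbb{A}_5$-point and at each of the two $\mathbb{E}_6$-points, so $\sum_P\dim \cO_{\PP^2,P}/\mathcal{J}_{P,6,5}=1+2+2+2=7$; since $\binom{5-1}{2}=6$, this gives $d_5=1+\dim H^0(\PP^2;\mathcal{J}_{6,5}(2))$. Hence the proposition is equivalent to the vanishing $H^0(\PP^2;\mathcal{J}_{6,5}(2))=0$, i.e.\ to the non-existence of a conic passing through $\mathcal{J}_{6,5}$.

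This vanishing I would obtain by a B\'ezout estimate against $\cC$ itself rather than by a direct study of the mutual position of the four singular points. Suppose $Q$ were a conic passing through $\mathcal{J}_{6,5}$. By the shape of the ideals in \autoref{lema:J}, $Q$ passes through the $\mathbb{A}_2$-point $P_0$, and at each of the $\mathbb{A}_5$-point $P_1$ and the two $\mathbb{E}_6$-points $P_2,P_3$ the local equation of $Q$ lies in $\langle\ell_P\rangle+\m_P^2$, so $Q$ is either singular at $P$ or smooth there with tangent line $\ell_P$, the tangent line of $\cC$ at $P$. A short local computation — in the singular case bounding $(Q\cdot\cC)_P$ below by $(\operatorname{mult}_P Q)(\operatorname{mult}_P\cC)$, which is $\ge 2\cdot 2$ at $P_1$ and $\ge 2\cdot 3$ at $P_2,P_3$; in the ``smooth and tangent'' case using that the single branch of an $\mathbb{E}_6$ meets its tangent line with multiplicity $4$ and that the two branches of the $\mathbb{A}_5$ are each tangent to $\ell_{P_1}$ — yields $(Q\cdot\cC)_{P_0}\ge 2$ and $(Q\cdot\cC)_{P_i}\ge 4$ for $i=1,2,3$. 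Summing, $Q\cdot\cC\ge 2+4+4+4=14>12=2\cdot 6$; since $\cC$ is irreducible of degree $6$ by \autoref{lema:irr}, it shares no component with the conic $Q$, contradicting B\'ezout's theorem. Hence no such $Q$ exists, $d_5=1$, and $m=d_1+d_5=1$. The last assertion is then immediate: $m=1$ means $\varphi_6(t)\mid\Delta_\cC(t)$ with multiplicity exactly one, so by the de~Franchis criterion recalled before \autoref{prop:cok} the curve $\cC$ is of torus type and admits a torus decomposition unique up to a scalar.

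The step I expect to be delicate is precisely this local intersection estimate: one must check that ``$Q$ tangent to the common tangent line of the branches of $\cC$ at $P$'' really forces $(Q\cdot\cC)_P\ge 4$ — and not merely $\ge 2$ — at the $\mathbb{A}_5$- and $\mathbb{E}_6$-points, uniformly over all possible local types of $Q$ (smooth, two lines through $P$, two lines with $P$ as their node, a double line through $P$). Once those bounds are in place, the B\'ezout contradiction and the rest of the argument are routine bookkeeping.
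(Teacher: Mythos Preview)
Your proof is correct and follows the same route as the paper: compute $m=d_1+d_5$ via \autoref{prop:cok}, kill $d_1$ trivially, reduce $d_5=1+\dim\ker\rho_5$ from \eqref{eq-superabundance} and \autoref{lema:J}, and rule out a conic in $\ker\rho_5$ by B\'ezout against the irreducible sextic~$\cC$. The paper's own proof simply asserts the last step (``contradicting B\'ezout's Theorem''); your explicit local bounds $(Q\cdot\cC)_{P_0}\ge 2$ and $(Q\cdot\cC)_{P_i}\ge 4$ for $i=1,2,3$, summing to $14>12$, are exactly the computation that justifies it, and your case analysis (singular versus smooth-and-tangent) is sound.
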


\begin{proof}
By \autoref{prop:cok}, $m=d_1+d_5=d_5$ since the target of morphism~$\rho_1$ is trivial. 
On the other hand, using equation~\eqref{eq-superabundance} and \autoref{lema:J} one obtains
$d_5=1+\dim\ker \rho_5$.
Finally, note that $\ker \rho_5=0$, since otherwise a conic curve would \emph{pass through} the ideal 
$\mathcal{J}_{6,5}$, contradicting Bezout's Theorem.
\end{proof}

Therefore the result below follows.

\begin{prop}
\label{prop-torus}
The curve $\cC$ is of torus type and it has a unique toric decomposition.
\qed
\end{prop}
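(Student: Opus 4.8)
The plan is to read off this statement directly from the preceding proposition together with the de~Franchis-type criterion recalled just before \autoref{prop:cok}. Recall that, by the extension of the de~Franchis method to rational pencils, $\cC$ is of torus type if and only if $\varphi_6(t)$ divides $\Delta_\cC(t)$, and that the torus decomposition is unique up to scalar multiplication as soon as the multiplicity of $\varphi_6(t)$ in $\Delta_\cC(t)$ is exactly~$1$.

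First I would invoke the previous proposition, which computes this multiplicity to be $m=1$. In particular $m\geq 1$, so $\varphi_6(t)\mid\Delta_\cC(t)$ and hence $\cC$ is of torus type, i.e. its defining polynomial can be written as $f_2^3+f_3^2$ with $\deg f_j=j$; and since $m=1$ exactly, the uniqueness clause of the criterion applies verbatim and pins down the pair $(f_2,f_3)$ up to rescaling. This already establishes the proposition, which is why it is stated with no further argument.

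There is really no obstacle left at this point: all the content has been absorbed into the earlier chain \autoref{lema:J} $\rightarrow$ the superabundance formula~\eqref{eq-superabundance} $\rightarrow$ the vanishing $\ker\rho_5=0$ (a Bezout argument, since no conic can pass through $\mathcal{J}_{6,5}$), which together force $m=d_5=1$. The only point one might wish to spell out for completeness is why the de~Franchis-type equivalence holds in the first place, namely that a factor $\varphi_6(t)$ of the Alexander polynomial corresponds to a dominant map from (a model of) $\cC$ to a $\PP^1$-orbifold of weights $(2,3,6)$ and that pulling back the natural sections recovers $(f_2,f_3)$; but this is standard and covered by the references cited above, so I would not reproduce it here.
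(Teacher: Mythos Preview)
Your proposal is correct and matches the paper's approach exactly: the proposition is stated with a \qed\ because it follows immediately from the preceding proposition ($m=1$) together with the de~Franchis-type criterion recalled just before \autoref{prop:cok}. There is nothing to add.
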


In a torus curve $V(F)$, where $F=f_2^3+f_3^2=0$, the intersection points of the conic $f_2=0$ and the cubic 
$f_3=0$ are singular points of $V(F)$. It is an easy exercise to check that singularities of type 
$\ba_2$, $\ba_5$, and $\EE_6$ can be constructed locally as $u^3+v^2$ where $u=0$ is the germ of a conic and 
$v=0$ is the germ of a cubic as follows:
\begin{enumerate}
\enet{(T\arabic{enumi})}
\item\label{A2} For $\mathbb{A}_2$, the curves $f_2=0$ and $f_3=0$ are smooth and transversal at the point.
\item\label{A5} For $\mathbb{A}_5$, the curve $f_3=0$ is smooth at the point and its intersection number
with $f_2=0$ is~$2$, for instance $(v+u^2)^3+v^2=0$.
\item For $\mathbb{E}_6$, the curve $f_2=0$ is smooth at the point, the curve $f_3=0$ is singular
and their intersection number is~$2$, for instance $u^3+(u^2+v^2)^2=0$.
\end{enumerate}
If $\sing V(F)=V(f_2)\cap V(f_3)$, then $V(F)$ is called a \emph{tame} torus curve, otherwise $V(F)$ is 
\emph{non-tame}.

\begin{lema}\label{lema:pencil}
The curve $\mathcal{C}$ is a non-tame torus curve $\cC=V(f_2^3-f_3^2)$. 

Moreover $V(f_2)$ is a smooth conic and $V(f_3)$, $f_3=\ell \cdot q$ is a reducible cubic where $V(q)$ is a smooth conic 
tangent to $V(f_2)$ only at one point and the line $V(\ell)$ passes through the remaining
two points of intersection of the conics.

In particular, the only non-tame singularity is the $\ba_2$-point.
\end{lema}

\begin{proof}
It follows from the explanation above (and B{\'e}zout's Theorem) that the only possible combination
of singularities at the intersection points of $V(f_2)$ and $V(f_3)$ is
$\ba_5+2\EE_6$; they are the singularities of any generic element of the pencil $F_{\alpha,\beta}=\alpha f_2+\beta f_3$.
Note in particular, that the genus of a generic element of the pencil is~$1$, and its resolution provides
an elliptic fibration.

Since $V(f_3)$ has two double points (at the points of type~$\EE_6$), it must be reducible and the
line $V(\ell)$ joining these two points is one of the components. Let $q:=\frac{f_3}{\ell}$.
Recall that $\cC$ is tangent to $V(f_3)$ at the point of type~$\ba_5$; in fact, it must be
tangent to $V(q)$. Using again B{\'e}zout's Theorem $V(q)$ must be smooth.

Let us resolve the pencil. It is easily seen that it is enough to perform the minimal embedded
resolution of the base points of the pencil. We obtain a map $\tilde{\Phi}:\tilde{X}\to\mathbb{P}^1$
where $\chi(\tilde{X})=14$ and the generic fiber is elliptic. The curve $V(f_3)$ produces
a singular fiber, see \autoref{fig:elliptic}, with four irreducible components: the strict transforms
of the line and the conic, and the first exceptional components~$A_1,B_1$ of blow-ups of the $\EE_6$-points.
We can blow-down the $-1$-rational curves (the strict transforms of the line and the conic) 
in order to obtain a relatively minimal map $\Phi:X\to\mathbb{P}^1$.
The above fiber becomes a Kodaira singular fiber of type $I_2$, while $\cC$ becomes a singular fiber
of type~$II$. 

For the fiber coming from $V(f_2)$, its type changes if $V(f_2)$ is smooth of reducible:
it is of type $\tilde{E}_6$ (smooth) or $\tilde{E}_7$ (irreducible), as it can be seen from \autoref{fig:elliptic}.
An Euler characteristic argument on this elliptic fibration 
shows that
$V(f_2)$ is smooth.
\end{proof}

%
%

After a projective change of coordinates, we can assume that
$P_1=[1:0:0]$, $P_2=[0:0:1]$ (the $\EE_6$-points), $Q=[0:1:0]$ (the $\ba_5$-point), $\ell=y$ and $q=x z-y^2$, where 
$V(\ell)\cap V(f_2)=\{P_1,P_2\}$ and $V(f_2)\cap V(f_3)=\{P_1,P_2,Q\}$.
Note moreover that only the projective automorphism $[x:y:z]\mapsto[z:y:x]$
and the identity globally fix the above points and curves.
The equation of $f_2$ must be:
$$
(x-y)(z-y)-u y (x-2y+z),
$$
for some $u\in\mathbb{C}^*$.

\begin{prop}
Any Eyral-Oka curve $\mathcal{C}$ is projectively equivalent to 
\begin{equation}
\mathcal{C}_a: y^2 (x z-y^2)^2-48 (26 a+45) f_2(x,y,z)^3=0. 
\end{equation}
where $f_2(x,y,z)=(x-y)(z-y)+4(a+2) y (x-2y+z)$ and $a^2=3$. 

Moreover, the curves $\mathcal{C}_+:=\mathcal{C}_{\sqrt{3}}$ and $\mathcal{C}_-:=\mathcal{C}_{-\sqrt{3}}$ 
are not projectively equivalent (in particular, they are not rigidly isotopic).
\end{prop}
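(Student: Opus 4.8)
The plan is to build on the structure already recorded. By \autoref{prop-torus} the torus decomposition $\mathcal{C}=V(f_2^3-f_3^2)$ is \emph{unique}, by \autoref{lema:pencil} we have $f_3=\ell\cdot q$ with $V(\ell)=V(y)$ the line through the two $\mathbb{E}_6$-points and $V(q)=V(xz-y^2)$ a smooth conic, and — as just computed above — the conics compatible with the $\mathbb{E}_6$- and $\mathbb{A}_5$-singularities make up the pencil $f_2=(x-y)(z-y)-u\,y(x-2y+z)$, $u\in\mathbb{C}^{*}$. Since the decomposition determines $f_2$ only up to a cube root of unity and, by the remark above, the only projective self-maps preserving the normalised configuration are the identity and $[x:y:z]\mapsto[z:y:x]$ (which fixes every member of the pencil), the only data left in $\mathcal{C}$ is the value of $u$ together with the scalar relating $f_2^3$ to $f_3^2=y^2(xz-y^2)^2$.

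These two are to be pinned down by the singularity that has not yet been imposed. For $u$ outside the finite bad set where $V(f_2)$ degenerates or becomes tangent to a branch of $V(f_3)$ at a base point, the base locus $V(f_2)\cap V(f_3)$ carries precisely $2\mathbb{E}_6+\mathbb{A}_5$, of total $\delta$-invariant $9$; since an Eyral-Oka curve has $\delta=10$ and hence is rational, $\mathcal{C}$ must have one further, non-tame, singular point, and it must be an $\mathbb{A}_2$ rather than a node. Regarding $\mathcal{C}$ as a fibre of the rational pencil $[f_2^3:f_3^2]\colon\mathbb{P}^2\dashrightarrow\mathbb{P}^1$, this extra point is a critical point of the pencil lying on $\mathcal{C}$; requiring it to be a degenerate ($\mathbb{A}_2$-type) rather than a Morse critical point is a single polynomial condition on $u$, and its critical value then forces the scalar. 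Carrying out this (elementary but lengthy) elimination, one finds that the condition on $u$ is a quadratic with roots $u=-4(a+2)$, $a^2=3$, and that the resulting equation is $y^2(xz-y^2)^2-48(26a+45)f_2^3=0$; hence $\mathcal{C}$ is projectively equivalent to $\mathcal{C}_a$. One should also verify the converse by a direct local computation at the four points, so that the normal form is not empty: $\mathcal{C}_a$ with $a^2=3$ does realise $2\mathbb{E}_6+\mathbb{A}_5+\mathbb{A}_2$.

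For the non-equivalence, suppose $\varphi\in\mathrm{PGL}_3(\mathbb{C})$ satisfies $\varphi(\mathcal{C}_+)=\mathcal{C}_-$. Any projectivity carries a torus decomposition to a torus decomposition, so by uniqueness $\varphi$ maps $V(f_2^{+})$ onto $V(f_2^{-})$ and $V(f_3^{+})$ onto $V(f_3^{-})$; as $f_3^{+}=f_3^{-}=y(xz-y^2)$ and a line cannot be carried onto a conic, $\varphi$ preserves $V(y)$ and $V(xz-y^2)$ separately. The projectivities doing this are exactly the maps $[x:y:z]\mapsto[\mu x:y:\mu^{-1}z]$ and the reflected maps $[x:y:z]\mapsto[\mu^{-1}z:y:\mu x]$ with $\mu\in\mathbb{C}^{*}$. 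Substituting either family into $f_2^{-}$, the coefficient of $xz$ forces the proportionality factor to $f_2^{+}$ to be $1$, and then the coefficient of $y^2$ gives $1+2u_{+}=1+2u_{-}$, i.e. $u_{+}=u_{-}$. Since $u_{\pm}=-4(\pm\sqrt3+2)$ are distinct, no such $\varphi$ exists; thus $\mathcal{C}_{+}$ and $\mathcal{C}_{-}$ are not projectively equivalent, and since by the first part the equisingular stratum of this combinatorics consists of exactly two $\mathrm{PGL}_3(\mathbb{C})$-orbits, $\mathcal{C}_{+}$ and $\mathcal{C}_{-}$ are not rigidly isotopic either.

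The only genuinely computational step is the elimination in the middle paragraph — locating the non-tame singular point of $V(f_2^3-f_3^2)$ as a function of $u$ and extracting the condition that it be a cusp — and this is precisely where the field $\mathbb{Q}(\sqrt3)$, and the curious constant $48(26a+45)$, enter. Everything else is formal once uniqueness of the torus decomposition is in hand: the reduction to the pencil and the non-equivalence argument are both soft.
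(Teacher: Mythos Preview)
Your proof is correct and follows the same approach as the paper: reduce to the one-parameter pencil of conics $f_2$, pin down $u$ by the degenerate-critical-value condition on the rational map $f_2^3/f_3^2$ (equivalently, your cusp-versus-node condition on the non-tame singular point), and rule out projective equivalence via the stabiliser of the normalised configuration. The paper's version of the last step is more compressed---it simply records that each $\mathcal{C}_\pm$ is invariant under $[x:y:z]\mapsto[z:y:x]$, which together with the earlier remark that this involution and the identity are the only projectivities fixing $(P_1,P_2,Q,\ell,q)$ is equivalent to your explicit check on the $xz$- and $y^2$-coefficients of~$f_2$.
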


\begin{proof}
For a generic value of~$u$, the meromorphic function $\frac{f_2^3}{\ell^2q^2}$ has two  
critical values outside $0,\infty$. Computing a discriminant we find the values of~$u$
for which only one double critical value arises, obtaining the required equation $f_2$.

The computation of the critical value gives the equations in the statement. The result follows from the fact 
that $\mathcal{C}_\pm$ are invariant by $[x:y:z]\mapsto [z:y:x]$.
\end{proof}

\begin{obs}
\label{obs-mult4}
If $L$ denotes the tangent line to $V(f_2)$ (and $V(q)$) at $Q$, then note that $(\mathcal{C}\cdot L)_Q=4$.
This can be computed using the equations but it is also a direct consequence of the construction of $\cC$.
Since $\cC$ at $Q$ has an $\ba_5$ singularity and $L$ is smooth, then using Noether's formula of intersection
$(\mathcal{C}\cdot L)_Q$ can be either 4 or 6. The latter case would imply that 
$(V(f_3)\cdot L)_Q=(V(q)\cdot L)_Q=3$, contradicting B\'ezout's Theorem.
\end{obs}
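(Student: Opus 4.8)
The goal is to compute the local intersection multiplicity $(\mathcal{C}\cdot L)_Q$, and the plan is to restrict the torus equation of $\mathcal{C}$ to the line $L$ and read off the order of vanishing at $Q$. Recall from \autoref{lema:pencil} that $\mathcal{C}=V(f_2^3-f_3^2)$ with $f_3=\ell\cdot q$, that $V(f_2)$ and $V(q)$ are smooth conics tangent to one another exactly at $Q$, that $V(\ell)$ is the line joining the two $\EE_6$-points $P_1,P_2$, and that $L$ is the common tangent line of $V(f_2)$ and $V(q)$ at $Q$. Since $\mathcal{C}$ is irreducible of degree $6$ we have $L\not\subset\mathcal{C}$, so $(\mathcal{C}\cdot L)_Q$ is finite and equals the order of vanishing at $Q$ of $(f_2^3-f_3^2)|_L=(f_2|_L)^3-(\ell|_L)^2(q|_L)^2$.

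I would first record three local facts at $Q$. Since $L$ is tangent to the smooth conic $V(f_2)$, we have $(L\cdot V(f_2))_Q\ge 2$, and B\'ezout gives $(L\cdot V(f_2))_Q\le\deg L\cdot\deg V(f_2)=2$; hence $(L\cdot V(f_2))_Q=2$, and the same argument yields $(L\cdot V(q))_Q=2$. Moreover $Q\notin V(\ell)$: otherwise, since $Q\in V(f_2)$, we would get $Q\in V(f_2)\cap V(\ell)=\{P_1,P_2\}$, impossible for the $\ba_5$-point $Q$; consequently $\ell$ is a unit in $\mathcal{O}_{\PP^2,Q}$, so $\ell|_L$ is a unit at $Q$. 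With these facts the computation is immediate: $(f_2|_L)^3$ vanishes at $Q$ to order $3\cdot 2=6$, while $(\ell|_L)^2(q|_L)^2$ vanishes to order $2\cdot 0+2\cdot 2=4$; as $4\neq 6$, the difference vanishes to order exactly $4$, so $(\mathcal{C}\cdot L)_Q=4$. This is precisely the argument sketched in the observation, made explicit: the ``Noether count'' alternative value $6$ would require $(\ell|_L)^2(q|_L)^2$ to vanish to order $\ge 6$, i.e. $(L\cdot V(q))_Q\ge 3$, which B\'ezout forbids. (Alternatively, one may substitute the explicit equation of $\mathcal{C}_a$ from the previous proposition and compute directly.)

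The step I would single out as the delicate one — the crux of the argument — is the B\'ezout bound $(L\cdot V(q))_Q\le 2$. It is this, namely that the ``conic direction'' $V(q)$ of the torus decomposition at $Q$ is a genuine conic rather than a line, that keeps the term $(\ell|_L)^2(q|_L)^2$ at order exactly $4$ and hence makes it dominate $(f_2|_L)^3$ in the difference. Geometrically it says that, although $L$ is tangent to the $\ba_5$ singularity of $\mathcal{C}$ at $Q$, it cannot follow that singularity one infinitely near point deeper, which is the only way one could have obtained the value $6$; everything else is routine bookkeeping with orders of vanishing along the smooth branch $L$.
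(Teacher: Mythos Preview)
Your proof is correct and follows essentially the same approach as the paper's observation: both hinge on the B\'ezout bound $(L\cdot V(q))_Q\le 2$ to force the $f_3^2$-term to dominate when restricting the torus equation $f_2^3-f_3^2$ to $L$. Your version is in fact slightly more direct, computing the order of vanishing to be exactly~$4$ without first invoking Noether's formula to reduce to the dichotomy $\{4,6\}$.
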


This construction was already given in~\cite{Oka-Eyral-fundamental-groups}. Let us end
this section with another particular feature of these curves.

Let us perform a change of coordinates such that $L=V(z)$ is the tangent line to $\mathcal{C}$ at $Q$ and the 
normalized affine equation of $\mathcal{C}$ in $x,y$ is symmetric by the transformation $x\mapsto -x$.
One obtains:
\begin{equation}\label{eq:afin}
0=h_a(x,y)=  y^2 \left(y -\frac{\left( x^{2}  -1\right)}{4}\right)^2+\frac{1}{2}
\left(\frac{2 a}{3}y-\frac{2 a+3}{24}  \left(x^{2}  -1\right)\right)^3
\end{equation}
A direct computation shows that $h_a(x,-y+\frac{x^2-1}{4})=h_{-a}(x,y)$,
i.e. these affine curves are equal. 
Let us interpret it in a computation-free way. 

Recall from \autoref{obs-mult4} that $(\cC\cdot L)_{Q}=4$. As in the proof of \autoref{lema:pencil},
we blow up the indeterminacy of the pencil map
$\PP^2\dashrightarrow \PP^1$ defined as $[x:y:z]\mapsto [f_2^3:f_3^2]$, whose fibers are denoted by 
$V(F_{\alpha,\beta})$, for $F_{\alpha,\beta}=\alpha f_{2}^3+\beta f_3^2$.
A picture of these fibration is depicted in \autoref{fig:elliptic}. Most of the exceptional components
of this blow-up are part of fibers. The last components $A_4,B_4$ over the $\EE_6$-points are sections 
while the last component~$E_3$ over the $\ba_5$-point is a $2$-section, that is, the elliptic fibration resctricted to 
this divisor is a double cover of $\PP^1$. It is ramified at the intersections with $E_2$ (in the fiber of 
$V(F_{1,0})$) and the
strict transform of $V(q)$ (in the fiber of 
$V(F_{0,1})$); they have both multiplicity~$2$.

In \autoref{fig:elliptic}, we show also the strict transform of~$L$, the tangent line at the $\ba_5$-point.
One check that this strict transform
becomes a 2-section; one ramification point is the intersection with $E_2$ (in the fiber of 
$V(F_{1,0})$) and the other one 
is the intersection with the strict transform of~$V(\ell)$ (in the fiber of 
$V(F_{0,1})$). 
In particular, there is no more ramification and hence $L$ intersects all other fibers 
$V(F_{\alpha,\beta})$ at two distinct points.

It is clear in \autoref{fig:elliptic} that the combinatorics of $E_4$ and $L$ coincide. In other words, interchanging the roles of $E_3$ and $L$ 
and blowing down accordingly, then one obtains a birational transformation of $\PP^2$ recovering a sextic curve 
$\cC'$ with the same combinatorics and a line $L'$ (the transformation of $E_3$) which is tangent at the $\ba_5$-point. 
Note that this birational transformation exchanges the line $V(\ell)$ (resp. the conic $V(q)$) and the corresponding 
conic $V(q')$ (resp. line $V(\ell')$). In particular, this implies that the transformation cannot be projective. 
Thus, this transformation exchanges curves of types $\cC_+$ and $\cC_-$ and results in the following.

\begin{thm}\label{prop-iso}
The complements $\mathbb{P}^2\setminus\left(L_\pm\cup\mathcal{C}_\pm\right)$ are analytically isomorphic.
\qed
\end{thm}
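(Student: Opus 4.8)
The plan is to make rigorous the computation-free argument sketched in the paragraphs preceding the statement. The key point is that the elliptic fibration $\Phi\colon X\to\PP^1$ obtained in the proof of \autoref{lema:pencil} (by blowing up the indeterminacy of $[x:y:z]\mapsto[f_2^3:f_3^2]$ and contracting the appropriate $(-1)$-curves) carries two divisors which play symmetric roles: the last exceptional component $E_3$ over the $\ba_5$-point and the strict transform $\widehat L$ of the tangent line $L$ at $Q$. First I would verify carefully, using \autoref{fig:elliptic} and \autoref{obs-mult4}, that both $\widehat L$ and $E_3$ are $2$-sections of $\Phi$, that they are disjoint from the sections $A_4,B_4$ over the $\EE_6$-points, and that each meets exactly two fibers with tangency: the fiber $V(F_{1,0})$ of type $II$ coming from $\cC$, and the fiber $V(F_{0,1})$ of type $I_2$ coming from $V(f_3)=V(\ell q)$, where $\widehat L$ is tangent along the strict transform of $V(\ell)$ while $E_3$ is tangent along the strict transform of $V(q)$. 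Since these are the only ramification points (a Riemann--Hurwitz count on each $2$-section confirms there is no further ramification), $\widehat L$ and $E_3$ have the same intersection profile with every fiber of $\Phi$.

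Next I would exhibit the birational self-map of the surface: starting from $X$, blow up the two points where $\widehat L$ meets the fibers $V(F_{1,0})$ and $V(F_{0,1})$ to separate the local branches appropriately, and then contract the chain of curves that previously realized $E_3$ (together with the $(-1)$-curves that must be blown down to return to a copy of $\PP^2$), obtaining a new model $\PP^2$ together with a sextic $\cC'$ and a line $L'$. By construction the combinatorial data of the configuration $E_3\cup(\text{fibers})$ and $\widehat L\cup(\text{fibers})$ coincide, so $\cC'$ has the same combinatorics as $\cC$ and $L'$ (the image of $E_3$) is tangent to $\cC'$ at the $\ba_5$-point with $(\cC'\cdot L')_{Q'}=4$. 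The map exchanges the line $V(\ell)$ with the conic $V(q')$ and the conic $V(q)$ with the line $V(\ell')$, hence it cannot be induced by a projective transformation; by the uniqueness of the torus decomposition (\autoref{prop-torus}) and the classification of Eyral-Oka curves into the two projective classes $\cC_+$ and $\cC_-$, the resulting curve must be the Galois-conjugate one, so the map sends $(\PP^2,L_+\cup\cC_+)$ to $(\PP^2,L_-\cup\cC_-)$.

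Finally, since every step of this birational transformation is a blow-up at a point or a blow-down of a $(-1)$-curve, and since each such center either lies on the curves $L_\pm\cup\cC_\pm$ or is disjoint from them in a way that is consistent on both sides, the induced map restricts to an isomorphism of quasi-projective varieties
\[
\PP^2\setminus\left(L_+\cup\cC_+\right)\;\xrightarrow{\ \sim\ }\;\PP^2\setminus\left(L_-\cup\cC_-\right),
\]
which is in particular an analytic isomorphism. Alternatively, one may simply invoke the explicit identity $h_a\left(x,-y+\tfrac{x^2-1}{4}\right)=h_{-a}(x,y)$ recorded just before \eqref{eq:afin}: the affine change of coordinates $(x,y)\mapsto\left(x,-y+\tfrac{x^2-1}{4}\right)$ is a polynomial automorphism of $\CC^2$ carrying $h_{\sqrt3}$ to $h_{-\sqrt3}$ and fixing the line at infinity $L=V(z)$, hence it extends to the desired isomorphism of complements. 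The main obstacle is the bookkeeping in the birational argument: one must track exactly which exceptional components are sections, $2$-sections, or fiber components through the sequence of blow-ups over the $\ba_5$-point, and check that the blow-down to $\PP^2$ after interchanging $E_3$ and $\widehat L$ is actually possible (i.e.\ that the resulting surface is $\PP^2$ and not some other rational surface), which is precisely where the detailed structure of \autoref{fig:elliptic} is needed.
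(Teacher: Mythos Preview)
Your proposal follows exactly the paper's own argument: the theorem carries a \qed because the proof is the discussion immediately preceding it, namely the explicit identity $h_a(x,-y+\tfrac{x^2-1}{4})=h_{-a}(x,y)$ together with its computation-free reinterpretation via the elliptic fibration and the symmetry between $E_3$ and the strict transform of~$L$. Your write-up of both arguments is faithful to that discussion.

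Two small corrections. First, in your fiber bookkeeping you identify $V(F_{1,0})$ as ``the fiber of type~$II$ coming from~$\cC$''; this is wrong. The fiber $V(F_{1,0})$ is the one supported on $V(f_2)$ (Kodaira type $\tilde E_6$ after contraction), and the ramification of both $2$-sections there is at their intersection with~$E_2$, exactly as the paper states; the curve $\cC$ gives a \emph{different} fiber (the type~$II$ one), which plays no role in the ramification of $E_3$ and~$\widehat L$. Second, in your alternative argument the phrase ``fixing the line at infinity $L=V(z)$'' is misleading: the map $(x,y)\mapsto(x,-y+\tfrac{x^2-1}{4})$ is a polynomial automorphism of $\CC^2$ that does \emph{not} extend to a projective automorphism of $\PP^2$. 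What you need (and what is true) is simply that it is an automorphism of $\CC^2=\PP^2\setminus L$ carrying $\{h_{\sqrt3}=0\}$ to $\{h_{-\sqrt3}=0\}$, hence an isomorphism of the affine complements; since $\PP^2\setminus(L_\pm\cup\cC_\pm)=\CC^2\setminus\{h_{\pm\sqrt3}=0\}$, this suffices.
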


\begin{figure}[ht]
\begin{center}
\iftikziii
\begin{tikzpicture}[xscale=1.25,yscale=.75,vertice/.style={draw,circle,fill,minimum size=0.2cm,inner sep=0}]
\def\evi{
\draw[color=brown,line width=1.2] (-2,-1)--(-2,3);
\draw (-3,1)--(3,1);
\draw[color=orange,line width=1.2] (2,-1)--(2,3);
\draw[color=orange,line width=1.1] (1,0)--(5,0);
}
\def\av{
\draw (-3,1)--(3,1);
\draw[color=orange,line width=1.2] (2,-1.5)--(2,1.5);
\draw[color=orange,line width=1.1] (1,0)--(5,0);
\draw (-3,-.5)--(3,-.5);
}
\def\aii{
\draw (-1,0)--(-1,2);
\draw (-1.25,1)--(.25,1);
\draw (0,0)--(0,2);
}
\evi
\begin{small}
\node at (-1.9,3.3) {$(A_1,-4)$};
\node[below] at (3,0) {$(A_2,-2)$};
\node[below=10pt] at (3,0) {$m=2$};
\node[] at (2.7,2.6) {$(A_3,-2)$};
\node[] at (3,1.3) {$(A_4,-1)$};
\end{small}
\begin{scope}[yshift=-100, yscale=-1]
\evi
\begin{small}
\node at (-2.7,2.8) {$(B_1,-4)$};
\node[below] at (5,0) {$(B_2,-2)$};
\node[below=10pt] at (5,0) {$m=2$};
\node[] at (2.7,2.5) {$(B_3,-2)$};
\node[] at (3.5,1.4) {$(B_4,-1)$};
\end{small}
\end{scope}
\begin{scope}[yshift=150]
\av
\begin{small}
\node[below] at (5,0) {$(E_1,-2)$};
\node[] at (2.7,2.) {$(E_2,-2)$};
\node[below=2pt] at (2.7,2.)  {$m=2$};
\node[below right] at (2.95,1.2) {$(E_3,-1)$};
\node[below right] at (2.95,-.4) {$(L,-1)$};
\end{small}
\end{scope}
\draw[color=orange,line width=1.1] plot [smooth] coordinates {(4,-4) (4,3.5)(1,4.25)};
\draw[color=brown,line width=1.1] plot [smooth] coordinates {(-4,-4) (-1,-3)  (-1,-1)(-3,0)
(-3.5,1) (-3.5,2) (-2,5)};
\draw[color=brown,line width=1.1]  plot [smooth] coordinates {(-3,-3) (-1.5,-2.5)  (-1.5,-1)(-3,-0.5)
(-4,1) (-4,2) (-3.7,4) (-2,7)};
\draw[color=red,line width=1.1] plot [smooth] coordinates {(.75,-5) (.75,6.5)(0.5,4.25)};
\draw[color=blue,line width=1.1] plot [smooth] coordinates {(-.5,-5)(-.5,2) (-1,3) };
\draw[color=blue,line width=1.1] plot [smooth] coordinates {(-1,3) (-.5,4)(-.5,6.5)(-0.25,4.25)};
\begin{small}
\node at (.8,-5.5) {$(F,0)$};
\node[below=2pt] at (.8,-5.5) {$g=1$};
\node at (-.5,-5.5) {$(\mathcal{C},0)$};
\node at (5.1,3.25) {$(V(f_2),-2)$};
\node[below=2pt] at (5.1,3.25) {$m=3$};
\node at (-4,5.25) {$(V(q),-1)$};
\node[below=2pt] at (-4,5.25) {$m=2$};
\node at (-4.2,-3) {$(V(\ell),-1)$};
\node[below=2pt] at (-4.2,-3) {$m=2$};
\end{small}
\end{tikzpicture} 
\else
\includegraphics{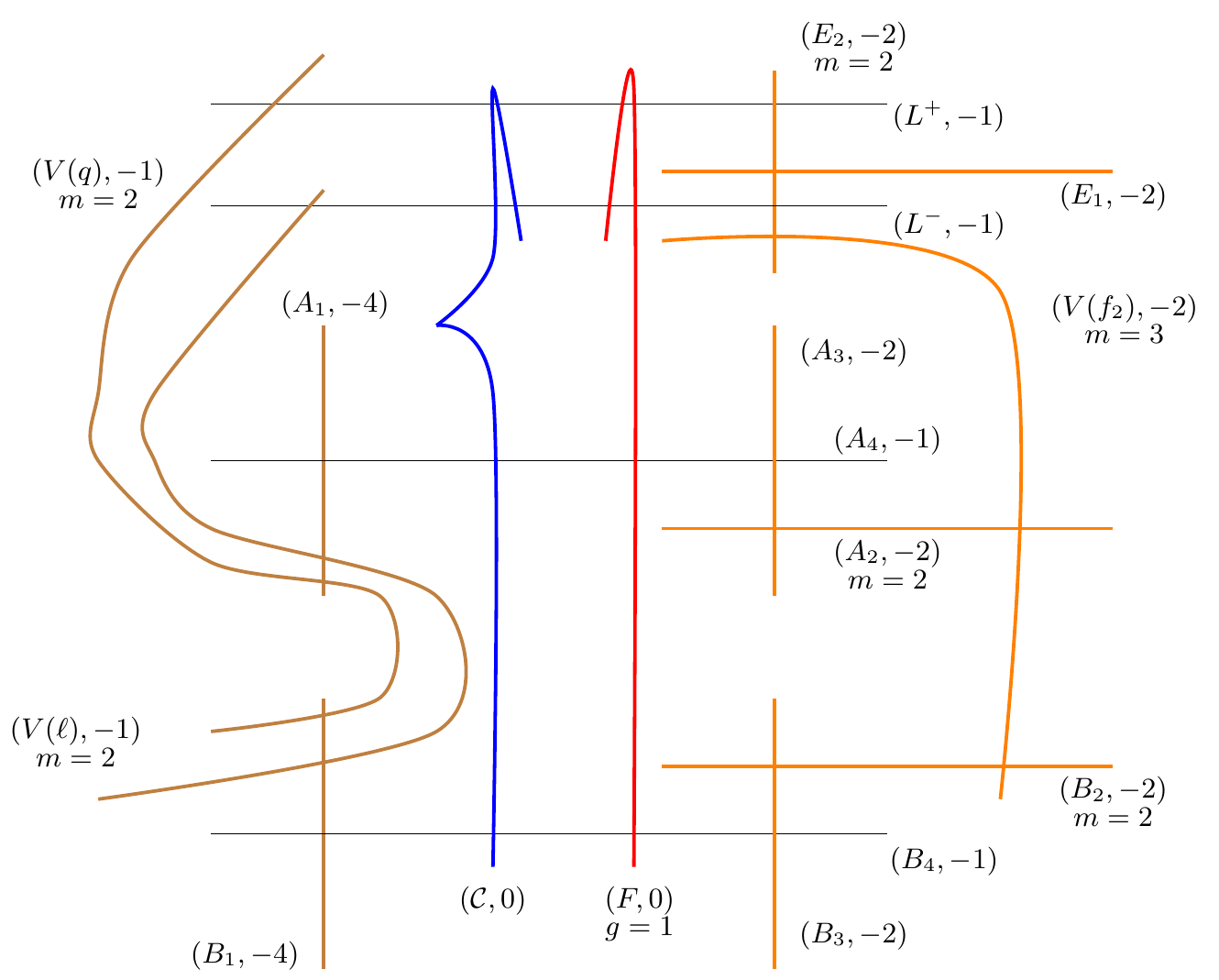}
\fi 
\end{center}
\caption{Elliptic fibration}
\label{fig:elliptic}
\end{figure}

\section{Fundamental group of Eyral-Oka curves}
\label{sec-okaeyral}

The main tool to compute the fundamental group of the complement of a plane curve is the Zariski-van Kampen method. 
In fact, in this method the computation of the fundamental group of $\mathbb{C}^2\setminus\mathcal{C}^{\text{\rm aff}}$
for a suitable affine part of the projective curve~$\mathcal{C}$ is obtained first.
Namely, a line $L$ is chosen (the line at infinity) so that $\mathbb{C}^2\equiv\mathbb{P}^2\setminus L$ is defined
and thus $\mathbb{C}^2\setminus\mathcal{C}^{\text{\rm aff}}\equiv\mathbb{P}^2\setminus\left(L\cup\mathcal{C}\right)$.
Once $\pi_1(\mathbb{C}^2\setminus\mathcal{C}^{\text{\rm aff}})$ is obtained, the fundamental group of 
$\mathbb{P}^2\setminus\mathcal{C}$ can be recovered after factoring out by (the conjugacy class of) a meridian 
of~$L$~\cite{fujita:82}. This is particularly simple if $L\pitchfork\mathcal{C}$, but the argument also follows for 
arbitrary lines. Applying \autoref{prop-iso}, one only needs to compute the fundamental group for one of the affine 
curves, since they are isomorphic. Finally, factoring out by (the conjugacy class of) a meridian of $L$ or $E_3$ 
will make the difference between the groups of the respective curves~$\cC_{\pm}$.

The Zariski-van Kampen method uses a projection $\mathbb{C}^2\to\mathbb{C}$, say the vertical one.
In \autoref{fig:okaeyral}, we have drawn a real picture of the affine curves $\mathcal{C}^{\text{\rm aff}}_{+}$ in 
$\mathbb{P}^2\setminus L_{+}$. For each vertical line, we have also drawn the real part of the complex-conjugate part
as  dotted lines. 

First, we study the situation at infinity.

\subsection{The topology at infinity}
\mbox{}

In order to understand the topology at infinity, let us simplify the construction of the elliptic fibration, 
carried out at the end of the previous section by minimizing the amount of blowing ups and blowing downs as follows.

Let us consider a sequence of blow-ups as in \autoref{fig:blowup}.

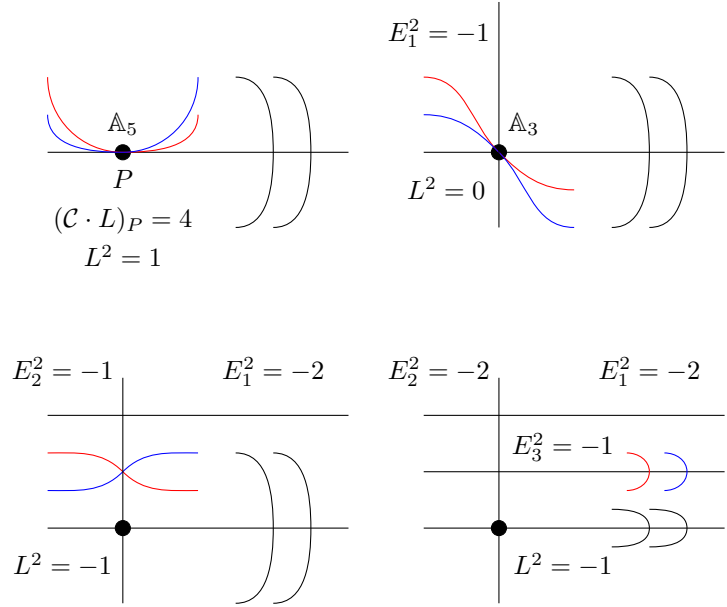
\begin{figure}[ht]
 \centering
\begin{tikzpicture}[vertice/.style={draw,circle,fill,minimum size=0.2cm,inner sep=0}]

\draw (-1,0) -- (3,0);
\node[vertice] (O1) at (0,0) {};
\node[below] at (O1.south)  {$P$};
\draw[color=red]  ($(O1.center)+(-1,1)$) to[out=-90,in=180]  (O1.center) to[out=0,in=-90]
($(O1.center)+(1,1/2)$);
\draw[color=blue]  ($(O1.center)+(-1,1/2)$) to[out=-90,in=180]  (O1.center) to[out=0,in=-90]
($(O1.center)+(1,1)$);
\node[above] at (O1.north)  {$\mathbb{A}_5$};
\node[below] at ($(O1.south)-(0,.5)$)  {$(\mathcal{C}\cdot L)_{P}=4$};
\node[below] at ($(O1.south)-(0,1)$)  {$L^2=1$};
\draw  ($(1.5,1)$) to[out=0,in=90]  (2,0) to[out=-90,in=0] (1.5,-1);
\draw  ($(2,1)$) to[out=0,in=90]  (2.5,0) to[out=-90,in=0](2,-1);

\begin{scope}[xshift=5cm]
\draw (-1,0) -- (3,0);
\node[vertice] (O1) at (0,0) {};
\draw[color=red]  ($(O1.center)+(-1,1)$) to[out=0,in=135]  (O1.center) to[out=-45,in=180]
($(O1.center)+(1,-1/2)$);
\draw[color=blue]  ($(O1.center)+(-1,1/2)$) to[out=0,in=135]  (O1.center) to[out=-45,in=180]
($(O1.center)+(1,-1)$);
\node[above right] at (O1.north)  {$\mathbb{A}_3$};
\node[below] at ($(O1.south)-(.7,.1)$)  {$L^2=0$};
\node[below] at ($(O1.south)-(.8,-2)$)  {$E_1^2=-1$};
\draw  ($(1.5,1)$) to[out=0,in=90]  (2,0) to[out=-90,in=0] (1.5,-1);
\draw  ($(2,1)$) to[out=0,in=90]  (2.5,0) to[out=-90,in=0](2,-1);
\draw (0,-1)--(0,2);
\end{scope}

\begin{scope}[yshift=-5cm]
\draw (-1,0) -- (3,0);
\node[vertice] (O1) at (0,0) {};
\draw[color=red]  (-1,1) to[out=0,in=135]  (0,.75) to[out=-45,in=180]
(1,.5);
\draw[color=blue]  (-1,.5) to[out=0,in=225]  (0,.75) to[out=45,in=180]
(1,1);
\node[below] at ($(O1.south)-(.8,.1)$)  {$L^2=-1$};
\node[below] at ($(O1.south)-(.8,-2.5)$)  {$E_2^2=-1$};
\node[below] at ($(O1.south)+(2,2.5)$)  {$E_1^2=-2$};
\draw  ($(1.5,1)$) to[out=0,in=90]  (2,0) to[out=-90,in=0] (1.5,-1);
\draw  ($(2,1)$) to[out=0,in=90]  (2.5,0) to[out=-90,in=0](2,-1);
\draw (0,-1)--(0,2);
\draw (-1,1.5)--(3,1.5);
\end{scope}

\begin{scope}[xshift=5cm,yshift=-5cm]
\draw (-1,0) -- (3,0);
\node[vertice] (O1) at (0,0) {};
\draw (-1,.75)--(3,.75);
\draw[color=red]  (1.7,1) to[out=0,in=90]  (2,.75) to[out=-90,in=0]
(1.7,.5);
\draw[color=blue]  (2.2,1) to[out=0,in=90]  (2.5,.75) to[out=-90,in=0]
(2.2,.5);
\node[below] at ($(O1.south)-(-.85,.1)$)  {$L^2=-1$};
\node[below] at ($(O1.south)-(.8,-2.5)$)  {$E_2^2=-2$};
\node[below] at ($(O1.south)-(-.85,-1.5)$)  {$E_3^2=-1$};
\node[below] at ($(O1.south)+(2,2.5)$)  {$E_1^2=-2$};
\draw  ($(1.5,.25)$) to[out=0,in=90]  (2,0) to[out=-90,in=0] (1.5,-.25);
\draw  ($(2,.25)$) to[out=0,in=90]  (2.5,0) to[out=-90,in=0](2,-.25);
\draw (0,-1)--(0,2);
\draw (-1,1.5)--(3,1.5);
\end{scope}
\end{tikzpicture}
 \caption{Sequence of blowups}
 \label{fig:blowup}
\end{figure}

\begin{enumerate}
\enet{(B\arabic{enumi})} 
\item The first picture represents a neighborhood of $L$ in $\mathbb{P}^2$. 
\item The second picture is a neighborhood of the total transform of $L$ by the blowing-up of~$Q$. 
Let us denote by $E_1$ the exceptional divisor (this notation will also be used for its strict transforms). 
Note that $E_1\cap L$ is a point of type $\mathbb{A}_3$ in $\mathcal{C}$ which is transversal to both divisors.
\item The third part is a neighborhood of the total transform of the divisor $E_1+L$ by the blow-up of $E_1\cap L$. 
In this case $E_2$ denotes the new exceptional component, which intersects $\cC$ at a nodal point not lying on 
$L\cup E_1$.
\item The fourth picture is obtained by blowing up that nodal point. For convenience, $E_3=L'$ will denote the new 
exceptional component. Note that the divisors $L'$ and $L$ are combinatorially indistinguishable.
\end{enumerate}

\begin{lema}
The sequence of blowing-ups and blowing-downs of \emph{\autoref{fig:blowup}} and{\rm~\ref{fig:ruled}}
converts the projection on vertical lines into the projection $\pi:\Sigma_2\to\mathbb{P}^1$
such that the strict transform of~$\mathcal{C}$ is disjoint with the negative section.

The exceptional components of the blowing-ups of the nodal points at infinity are the strict
transforms of the lines~$L_\pm$.
\end{lema}

\begin{figure}[ht]
\begin{center}
\begin{subfigure}[b]{.45\textwidth}
\begin{tikzpicture}[yscale=-1,vertice/.style={draw,circle,fill,minimum size=0.2cm,inner sep=0}]
\def\ramaderecha{
\node[vertice] (O) at (0,3) {};
\node (A1) at (.5,4) {};
\node (A2) at (1,4.2) {};
\node (A3) at (1.5,4.1) {};
\node(A4) at (2,3.9) {};
\node (A5) at (3,3.8) {};
\draw[line width=1.2] (O.center) to[out=90,in=-155] (A1.center) to[out=25,in=180] (A2.center) to[out=0,in=-215] (A3.center) to[out=-35,in=-210] (A4.center) to[out=-30,in=180] (A5.center) ;
\node (P) at (0,0) {};
\node(B1) at (1,.05) {};
\node[vertice] (B2) at (2,.4) {};
\node (B3) at (3,1) {};
\draw[line width=1.2] (P.center) to[out=0,in=175] (B1.center) to[out=-5,in=180] (B2.center) to[out=0,in=245] (B3.center) ;

\node(C1) at (.5,2) {};
\node(C2) at (1,1.5) {};
\node(C3) at (3,0) {};

\node[right] at (O.east) {$\mathbb{A}_2$};
\node[below right] at (B2) {$\mathbb{E}_6$};

\draw[line width=1.2,dashed] (O.center) to[out=-90,in=95] (C1.center) to[out=-85,in=130] (C2.center)to[out=-50,in=180] (1.5,-.5) to[out=0,in=180] (B2.center) to[out=0,in=180] (C3.center) ;
\draw (2,4.5)--(2,-.5);}
\ramaderecha
\draw (0,4.5)--(0,-.5);
\begin{scope}[xscale=-1]
\ramaderecha
\end{scope}
\end{tikzpicture}
 \caption{Real affine picture of Eyral-Oka's curve}
 \label{fig:okaeyral}
\end{subfigure}
\hspace{3mm}
\begin{subfigure}[b]{.45\textwidth} 
\begin{tikzpicture}[xscale=-1,scale=1.7,vertice/.style={draw,circle,fill,minimum size=0.2cm,inner sep=0}]
\begin{scope}
\node[vertice] (O1) at (0,0) {};
\draw[color=red]  (-1,1) to[out=0,in=135]  (0,.75) to[out=-45,in=180]
(1,.5);
\draw[color=blue]  (-1,.5) to[out=0,in=225]  (0,.75) to[out=45,in=180]
(1,1);
\node[below] at ($(O1.south)-(.45,0.5)$)  {$E_2^2=0$};
\node[below] at ($(O1.south)+(1,1.5)$)  {$E_1^2=-2$};
\draw  ($(-1,.25)$) to[out=0,in=135]  (0,0) to[out=-45,in=0] (1,-.25);
\draw  ($(-1,-.25)$) to[out=0,in=225]  (0,0) to[out=45,in=0](1,.25);
\draw (0,-1)--(0,2);
\draw (-1,1.5)--(1.5,1.5);
\node[vertice] (O1) at (0,.75) {};
\draw[dashed] (.5,-1)--(.5,2);
\end{scope}
\end{tikzpicture}
\caption{Ruled surface near infinity}
\label{fig:ruled}
\end{subfigure}
\end{center}
\caption{}
\end{figure}
%

Let us explain how we have constructed \autoref{fig:okaeyral}.

\begin{enumerate}
\enet{(F\arabic{enumi})} 
\item We compute the discriminant of $h_a(x,y)$ of \eqref{eq:afin} with respect to~$y$. This allows
to check that the~$\mathbb{A}_2$ point is in the line~$x=0$ and the~$\mathbb{E}_6$ point
is in the lines~$x=\pm 1$.
\item We factorize the polynomials $h_a(x_0,y)$ for $x_0=0,\pm 1$ and we obtain which intersection points
are up and down.
\item For $x_0=0,\pm1$, let $y_0$ be the $y$-coordinate of the singular point. For the~$\mathbb{A}_2$ point, 
$y_0=-\frac{a+1}{8}$ we check that the cusp is tangent to the vertical line, and the Puiseux parametrization 
is of the form $y=y_0+y_1 x^{\frac{2}{3}}+\dots$. We obtain that $y_1<0$ and it implies that the real part 
of the complex solutions is bigger than the real solution, near $x=0$.
\item We proceed in a similar way for the $\mathbb{E}_6$ point; in this case, a Puiseux parametrization is
of the form $y=y_0+y_1 x+y_2 x^{\frac{4}{3}}+\dots$, and $y_2<0$.
\item\label{F5} With this data, we draw \autoref{fig:okaeyral}. Note that between the vertical fibers $x=0,1$, 
we have an odd number of crossings. We show later that the actual number is irrelevant for the computations.
\item\label{F6} If we look the situation at $\infty$ in $\Sigma_2$ (\autoref{fig:ruled}), we check that the imaginary branches are still up. 
Hence, from $x=1$ to $\infty$ there is an even number of crossings with the real parts and, as before,
the actual number is irrelevant.
\end{enumerate}

\begin{obs}
The two real branches that go to infinity are the real part of the branches of $\cC_-$
at~$P_-$, while the two conjugate complex branches belong to the branches of $\cC_+$ at~$P_+$.
\end{obs}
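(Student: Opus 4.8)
The plan is to locate the four branches of $\cC_+^{\mathrm{aff}}$ at infinity from the leading form of the polynomials $h_a$ of \eqref{eq:afin}, and then transport them across the shear involution $\phi\colon(x,y)\mapsto(x,-y+\tfrac{x^2-1}{4})$. By the identity $h_a(x,-y+\tfrac{x^2-1}{4})=h_{-a}(x,y)$ this $\phi$ carries $\cC_+^{\mathrm{aff}}=\{h_{\sqrt3}=0\}$ onto $\cC_-^{\mathrm{aff}}=\{h_{-\sqrt3}=0\}$; it is the affine restriction of the birational map of \autoref{fig:elliptic} that exchanges $\cC_+$ and $\cC_-$, and it has real coefficients.

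First I would expand the degree-$6$ part of $h_a$; it equals $x^4\bigl(\tfrac1{16}y^2-\tfrac{(2a+3)^3}{27648}x^2\bigr)$. Hence $\cC_a^{\mathrm{aff}}$ meets $L=V(z)$ at exactly three points: $Q=[0:1:0]$ with multiplicity $4$, and $[1:\pm m_a:0]$ with $m_a^2=\tfrac{(2a+3)^3}{1728}$, these last two being real when $2a+3>0$ and a complex-conjugate pair when $2a+3<0$. Since $(\cC\cdot L)_Q=4$ by \autoref{obs-mult4} and the remaining singularities $2\EE_6+\ba_2$ of $\cC_a$ lie in the affine chart (the discriminant analysis (F1)--(F4)), the quadruple point $Q$ is the $\ba_5$-point of $\cC_a$; call it $P_a$. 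For $a=\sqrt3$ we have $2a+3>0$, so apart from the two branches of $\cC_+$ at $P_+$ the curve $\cC_+^{\mathrm{aff}}$ has two \emph{real} branches at infinity, over $[1:\pm m_{\sqrt3}:0]$; for $a=-\sqrt3$ we have $2a+3<0$, so the two ``extra'' branches of $\cC_-^{\mathrm{aff}}$ at infinity form a complex-conjugate pair over $[1:\pm m_{-\sqrt3}:0]$.

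Next I would make $\phi$ explicit at infinity: homogenising, $\phi=[X:Y:Z]\mapsto[4XZ:\,X^2-Z^2-4YZ:\,4Z^2]$, a quadratic Cremona involution whose Jacobian is a non-zero constant times $Z^3$, so $\phi$ is a polynomial automorphism of $\CC^2$ which contracts the line $L=V(z)$ onto its unique base point $[0:1:0]=Q$. Since $\phi$ maps $\cC_+^{\mathrm{aff}}$ onto $\cC_-^{\mathrm{aff}}$ and maps $L$ onto $Q$, it carries the two real branches of $\cC_+^{\mathrm{aff}}$ over the real points $[1:\pm m_{\sqrt3}:0]\in L$ to two branches of $\cC_-^{\mathrm{aff}}$ at $Q$; as $Q$ is the $\ba_5$-point $P_-$ of $\cC_-$, which carries exactly two local branches, these are all of them, and, being images of real branches under a real map, they are the real part of the branches of $\cC_-$ at $P_-$ (so the $\ba_5$ of $\cC_-$ has real local type $u^2-v^6$). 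Running $\phi$ in the opposite direction sends the complex-conjugate pair of branches of $\cC_-^{\mathrm{aff}}$ over $[1:\pm m_{-\sqrt3}:0]$ to the two branches of $\cC_+$ at $P_+=Q$, which are therefore a complex-conjugate pair; these are precisely the two imaginary branches at infinity drawn in \autoref{fig:okaeyral}. Both assertions follow.

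The delicate point is the bookkeeping of reality through $\phi$: because $\phi$ genuinely contracts the line at infinity, a branch of $\cC_\pm$ transverse to $L$ at a \emph{smooth} point is sent to a branch of $\cC_\mp$ at the \emph{singular} point $Q$, so one must be certain that the images of the two transversal points exhaust the $\ba_5$ — this is exactly where one uses that $\cC_\mp$ has no singular point at infinity other than that $\ba_5$. One can also bypass $\phi$ entirely: on the elliptic surface of \autoref{fig:elliptic} the strict transforms of $L$ and of $E_3$ are $2$-sections meeting the type-$II$ fibre in the same pair of points, and blowing the configuration down the two admissible ways identifies the two transversal points of $\cC_+\cap L$ with the two ends of $E_3$ lying over the $\ba_5$-point of $\cC_-$, which gives the claim directly.
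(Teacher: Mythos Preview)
Your argument is correct. The paper itself does not give an explicit proof of this remark; it is presented as an observation that the reader is expected to extract from the analysis in (F1)--(F6) together with \autoref{fig:ruled}. Implicitly the paper's reasoning is: (F6) asserts that ``the imaginary branches are still up'' as $|x|\to\infty$, and in \autoref{fig:ruled} the upper node on the fibre $E_2$ is the one coming from contracting $E_3$ (the coloured branches are the strict transforms of the two $\ba_5$-branches of~$\cC_+$), hence the imaginary pair goes to~$P_+$ and the real pair to~$P_-$.

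Your route is genuinely different and more self-contained: you compute the degree-$6$ form of $h_a$ directly, read off that the two transversal points of $\cC_a\cap L$ are real precisely when $2a+3>0$, and then transport reality through the real quadratic Cremona involution $\phi$ to identify the $\ba_5$-branches on the other side. This has the advantage of not relying on the heuristic ``which branch is up'' in the real picture, and it makes the role of the sign of $2a+3$ completely explicit. Two minor comments: the reference to \autoref{fig:elliptic} for the birational map should really be to the construction preceding \autoref{prop-iso} (or to \autoref{fig:blowup}); and once you know the two transversal points of $\cC_+\cap L$ are real, the conclusion that the $\ba_5$-branches of $\cC_+$ are the complex pair already follows without invoking $\phi$ in the reverse direction, simply because the affine picture has exactly two real and two non-real branches for large real~$x$ (this is what (F6) verifies). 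Your $\phi$-argument is nonetheless a clean independent confirmation.
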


\subsection{Strongly real curves and braid monodromy factorization}
\mbox{}

This curve is said to be \emph{strongly real} since it is real, all its affine singularities are real, 
and thus \autoref{fig:okaeyral} contains all the information to compute the braid monodromy 
of~$\mathcal{C}^{\text{aff}}$ and, as a consequence, the fundamental group of its complement.

Let $f(x,y)\in\mathbb{C}[x,y]$ be a monic polynomial in~$y$. The braid monodromy of $f$ with respect to its 
vertical projection is a group homomorphism $\nabla:\mathbb{F}_r\to\mathbb{B}_d$, where $d:=\deg_y f$ and $r$ 
is the number of distinct roots $\{x_1,\dots,x_r\}$ of the discriminant of~$f$ with respect to~$y$ (i.e. the 
number of non-transversal vertical lines to $f=0$). In our case, $r=3$ and $d=4$. In order to calculate $\nabla$,
one starts by considering $x=x_0$ a transversal vertical line and $\{y_1,\dots,y_d\}$ the roots of $f(x_0,y)=0$. 
By the continuity of roots, any closed loop $\gamma$ in $\mathbb{C}$ based at $x_0$ and avoiding the discriminant 
defines a braid based at  $\{y_1,\dots,y_d\}$ and denoted by $\nabla(\gamma)$. Since the vertical projection 
produces a locally trivial fibration outside the discriminant, the construction of the braid only depends on the
homotopy class of the loop $\gamma$. This produces the well-defined morphism~$\nabla$.

Moreover, the morphism $\nabla$ can be used to define an even finer invariant of the curve called the braid
monodromy factorization, via the choice of a special geometrically-based basis of the free group~$\mathbb{F}_r$.
Note that the group $\mathbb{F}_r$ can be identified with $\pi_1(\mathbb{C}\setminus\{x_1,\dots,x_r\};x_0)$ 
and a basis can be chosen by meridians $\gamma_i$ around $x_i$ such that $(\gamma_r\cdot\ldots\cdot\gamma_1)^{-1}$ 
is a meridian around the point at infinity (this is known as a \emph{pseudo-geometric basis}). 
A braid monodromy factorization of $f$ is then given by the $r$-tuple of 
braids~$(\nabla(\gamma_1),\dots,\nabla(\gamma_r))$.

The morphism $\nabla$ is enough to determine the fundamental group of the curve, however a braid monodromy 
factorization is in fact a topological invariant of the embedding of the fibered curve resulting from the union
of the original curve with the preimage of the discriminant (see~\autoref{thm-acc}).

In order to compute a braid monodromy factorization, two important choices are required. First a pseudo-geometric 
basis in $\pi_1(\mathbb{C}\setminus\{x_1,\dots,x_r\};x_0)\equiv\mathbb{F}_r$ and second, an identification
between the braid group based at $\{y_1,\dots,y_d\}$ and the standard Artin braid group~$\mathbb{B}_d$.
This is done with the following choices, see~\cite{acc:01b}.

\begin{figure}[ht]
\begin{center}
\iftikziii
\begin{tikzpicture}[scale=2,vertice/.style={draw,circle,fill,minimum size=0.2cm,inner sep=0}]
\tikzset{flecha/.style={decoration={
  markings,
  mark=at position #1 with  {\arrow[scale=1.5]{>}}},postaction={decorate}}}
\def\rd{.25}
\foreach \a in {1,...,7}
{
\coordinate (P-\a) at (10-\a,0);
}
\node[] at (P-1) {$*$};
\node[right=2pt] at (P-1) {$x_0$};
\node at ($.5*(P-5)+.5*(P-6)$) {$\dots$};
\foreach \a in {2,3,4,7}
{
\node[vertice] at (P-\a) {};
\draw (P-\a) circle (\rd);
}
\draw[flecha=0.6] (P-1)-- node [below=5pt] {$\alpha_1$} ($(P-2)+(\rd,0)$);
\foreach \a [evaluate={\b=int(\a+1);}] in {2,3,4}
{
\draw[flecha=.6] ($(P-\a)-(\rd,0)$)-- node [below=5pt] {$\alpha_\a$} ($(P-\b)+(\rd,0)$);
}
\draw[flecha=.6] ($(P-6)-(\rd,0)$)-- node [below=5pt] {$\alpha_r$}($(P-7)+(\rd,0)$);
\foreach \a[evaluate={\b=int(\a-1);}] in {2,3,4}
{
\draw[-{>[scale=1.5]}]  ($(P-\a)+(0,\rd)$)--($(P-\a)+(0,\rd)-.1*(\rd,0)$);
\draw[-{>[scale=1.5]}]  ($(P-\a)-(0,\rd)$)--($(P-\a)-(0,\rd)+.1*(\rd,0)$);
\node[above right =10pt] at ($(P-\a)$) {$x_\b$};
\node[above =15pt] at ($(P-\a)$) {$\delta^+_\b$};
\node[below =15pt] at ($(P-\a)$) {$\delta^-_\b$};
}
\draw[-{>[scale=1.5]}]  ($(P-7)-(\rd,0)$)--($(P-7)-(\rd,0)-.1*(0,\rd)$);
\node[above=15pt] at ($(P-7)$) {$x_r$};
\node[left=15pt] at ($(P-7)$) {$\delta_r$};
\end{tikzpicture}
\else
\includegraphics{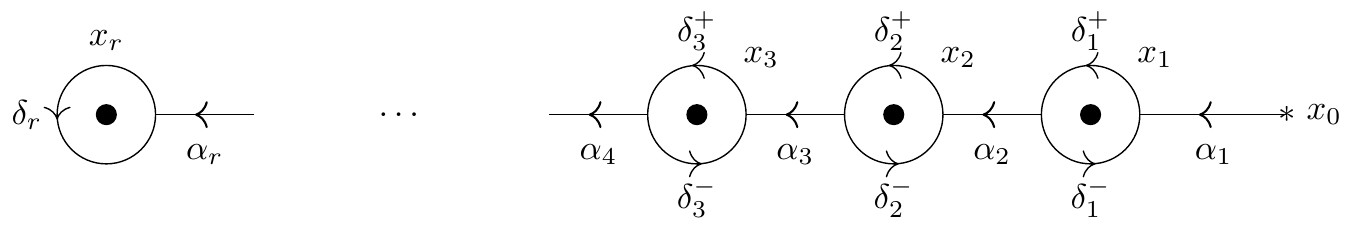}
\fi 
\end{center}
\caption{Pseudo-geometric basis}
\label{fig:geobase}
\end{figure}

\begin{enumerate}
\enet{(C\arabic{enumi})} 
\item For a strongly real curve, a pseudo-geometric basis is chosen as in \autoref{fig:geobase}.
Let 
\[
\underset{1\leq i<r}{\delta_i:=\delta_i^+\cdot\delta_i^-},\quad
\underset{1< i\leq r}{\beta_i:=\prod_{j=1}^{i-1}\alpha_j\cdot\delta_j^+}.
\]
The basis is:
\begin{equation}
\gamma_1:=\alpha_1\cdot\delta_1^+\cdot\delta_1^-\cdot\alpha_1^{-1},\quad
\gamma_i:=\left(\beta_i\cdot\alpha_i\right)\cdot\delta_i^+\cdot\delta_i^-\cdot
\left(\beta_i\cdot\alpha_i\right)^{-1},\ 1<i\leq r.
\end{equation}
Applied to our case, paths $\gamma_1,\gamma_2,\gamma_3$ are required around the points $-1,0,1$ respectively.
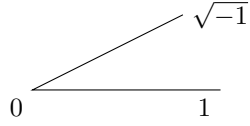
\begin{figure}[ht]
\begin{center}
\begin{tikzpicture}
\coordinate (O) at (0,0);
\coordinate (U) at (2.5,0);
\coordinate (I) at (2,1);
\draw (U)--(O)--(I);
\node[below left] at (O) {$0$};
\node[below left] at (U) {$1$};
\node[right] at (I) {$\sqrt{-1}$};
\end{tikzpicture}
\caption{Complex line}
\label{fig:complex} 
\end{center}
\end{figure}

\item\label{item-identification}
The identification of the braid group on $\{y_1,\dots,y_d\}$ is made using a lexicographic order of the 
roots on their real parts ($\Re y$) and imaginary parts ($\Im y$) such that $\Re y_1\geq\dots\geq\Re y_d$ and 
$\Im y<\Im y'$ whenever $\Re y=\Re y'$. A very useful fact about this canonical construction is that it allows 
one to identify the braids over any path in \autoref{fig:geobase} (whether open or closed) with braids 
in~$\mathbb{B}_d$. 
These conventions can be understood from \autoref{fig:complex}. Namely, projecting the braids onto the real 
line~$\mathbb{R}$, and for complex conjugate numbers we slightly deform the projection such that the positive 
imaginary part number goes to the right and the negative one to the left. In a crossing, the upward strand is the 
one with a smaller imaginary part.

In our case, note that the braid group is~$\mathbb{B}_4$ generated by the Artin system $\sigma_i$, 
$i=1,\dots,3$, the positive half-twist interchanging the $i$-th and $(i+1)$-th strands.

\begin{figure}[ht]
\begin{center}
\iftikziii
\begin{tikzpicture}
\draw ($(3,3)$)-- ($(0,1)$);
\draw[dashed] ($(3,1)$)-- ($(0,3)$);
\node[] at (0,0) {$x=x_2$};
\node[] at (3,0) {$x=x_1$};
\node[] at (8,1) {$x=x_1$};
\node[] at (8,3) {$x=x_2$};
\draw[line width=1.2] (2.5,.5) --(2.5,3.5);
\draw[line width=1.2] (.5,.5) --(.5,3.5);
\draw[-{[scale=1.5]>}] (2.4,.7)--(.6,.7);
\draw[-{[scale=1.5]>}] (8,1.2)--(8,2.9);
\node at (5,.5) {$-$};
\node at (6,.5) {$+$};
\node at (7,.5) {$\mathbb{R}$};
\node at (6,3.5) {$-$};
\node at (7,3.5) {$+$};
\node at (5,3.5) {$\mathbb{R}$};
\braid[xscale=-1,rotate=180,height=.75cm]  at (5,-1) s_2^-1 s_1;
\end{tikzpicture}
\else
\includegraphics{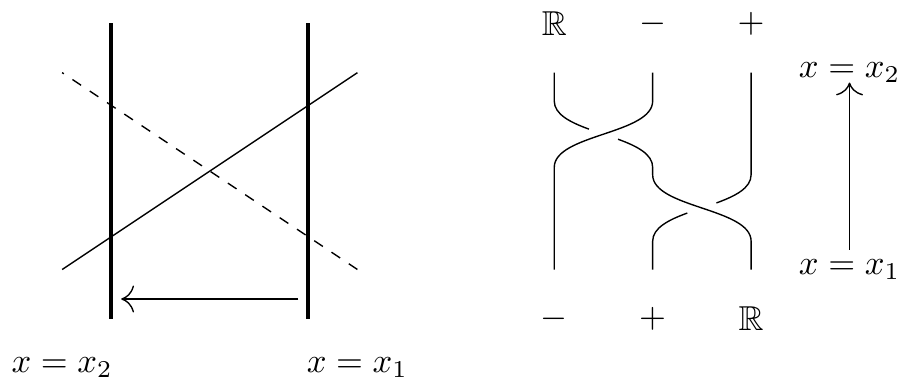}
\fi
\caption{Crossing of a real branch with a couple of complex conjugate branches}
\label{fig:crossing} 
\end{center}
\end{figure}

\item 
Given a strongly real curve one can draw its real picture. This real picture might be missing complex conjugate 
branches. For those, one can draw their real parts as shown in \autoref{fig:okaeyral} with dashed curves.
This picture should pass the \emph{vertical line test}, that is, each vertical line should intersect the picture
in $d$ points counted appropriately, that is, solid line intersections count as one whereas dashed line intersections
count as two.

At this point, the braids can be easily recovered as long as the dashed lines have no intersections as follows:
\begin{itemize}
 \item 
 At intersections of solid lines one has a singular point. The local braid over $\delta^+$ and $\delta^-$ can be 
obtained via the Puiseux pairs of the singularity. 
 \item 
 At an intersection of a solid and a dashed line as in \autoref{fig:crossing}, the local braid on three strands 
 $\sigma_1^{-1}\cdot\sigma_2$ is obtained as a lifting of the open path $\alpha$ crossing the intersection, where 
 the generators $\sigma_i$ are chosen locally and according the identification given in~\ref{item-identification}. 
 In the reversed situation (that is, when the solid and dashed lines are exchanged), the inverse braid is obtained.
 This justifies the assertions in \ref{F5} and~\ref{F6}. 
\end{itemize}
In our case the following braids in $\mathbb{B}_4$ are obtained:
$$
\alpha_1\mapsto 1,\quad \alpha_2\mapsto \sigma_2^{-1}\cdot\sigma_1
,\quad \alpha_3\mapsto \sigma_1^{-1}\cdot\sigma_2.
$$
\end{enumerate}

In order to finish the computation of the braid monodromy factorization of $\mathcal{C}^{\text{aff}}$,
we  need to compute the braids associated to $\delta_i^\pm$, $i=1,2$, and $\delta_3$.
Next lemma provides the key tools.

\begin{lema}
Let $f(x,y)=y^3-x$; following the above conventions,
the braid in $\mathbb{B}_3$ obtained from the path $\alpha: t\mapsto x=\exp(2 \sqrt{-1}\pi t)$, $t\in[0,1]$,
equals $\sigma_2\cdot\sigma_1$. 
For $g(x,y)=y^3+x$, the braid associated with $\alpha$ equals $\sigma_1\cdot\sigma_2$.
\end{lema}

\begin{proof}
Note that for $x=1$, the values of the roots of the $y$-polynomial $f(1,y)$ are $1,\zeta,\bar{\zeta}$, 
for $\zeta:=\exp\left(\frac{2\sqrt{-1}\pi}{3}\right)$, and thus the associated braid is nothing but the 
rotation of angle $\frac{2}{3}\pi$.

\begin{figure}[ht]
\begin{center}
\iftikziii
\begin{tikzpicture}[vertice/.style={draw,circle,fill,minimum size=0.2cm,inner sep=0}]
\node[vertice] at (0:1) {};
\node[above=5pt] at (0:1) {$1$};
\node[vertice] at (120:1) {};
\node[above=5pt] at (120:1) {$\zeta$};
\node[vertice] at (-120:1) {};
\node[below=5pt] at (-120:1) {$\bar{\zeta}$};
\draw[-{[scale=2]>},xshift=-.5cm] (120:.75) arc(120:240:.75);
\node at (-1.5,0) {$\frac{2\pi}{3}$};
\node at (4,-1.5) {$-$};
\node at (5,-1.5) {$+$};
\node at (6,-1.5) {$\mathbb{R}$};
\node at (5,1.5) {$-$};
\node at (6,1.5) {$+$};
\node at (4,1.5) {$\mathbb{R}$};
\braid[xscale=-1,rotate=180,height=.75cm]  at (4,1) s_1 s_2;
\end{tikzpicture}
\else
\includegraphics{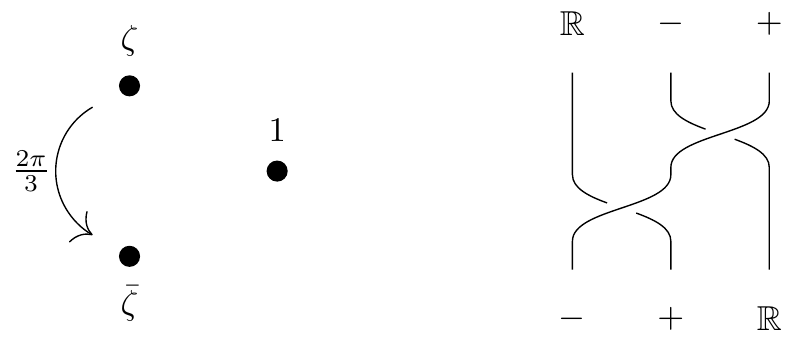}
\fi
\caption{Braid for $y^3=x$.}
\label{fig:tercio} 
\end{center}
\end{figure}

The result follows from the identification described in \ref{item-identification} 
and \autoref{fig:tercio}.
\end{proof}

Applying this in our situation one obtains (see~\autoref{fig:okaeyral}):
\begin{equation}
\delta_1^\pm\mapsto(\sigma_1\cdot\sigma_2)^2\quad
\Longrightarrow \delta_1,\delta_3\mapsto(\sigma_1\cdot\sigma_2)^4
\qquad
\delta_2^\pm\mapsto\sigma_2\cdot\sigma_3\quad
\Longrightarrow \delta_2\mapsto(\sigma_2\cdot\sigma_3)^2.
\end{equation}

Combining all the braids obtained above, one can give the monodromy factorization.

\begin{prop}
\label{prop-braid-monod}
The braid monodromy factorization of $\mathcal{C}^{\text{\rm aff}}$ is
$(\tau_1,\tau_2,\tau_3)$ where:
\begin{align}
\tau_1:=&(\sigma_1\cdot\sigma_2)^4,\nonumber\\
\tau_2:=&(\sigma_1\cdot\sigma_2\cdot\sigma_1^2)\cdot(\sigma_2\cdot\sigma_3)^2
\cdot(\sigma_1\cdot\sigma_2\cdot\sigma_1^2)^{-1},\label{eq:bm}\\
\tau_3:=&(\sigma_2\cdot\sigma_1^2\cdot\sigma_2\cdot\sigma_3)
\cdot(\sigma_2\cdot\sigma_1)^4\cdot(\sigma_2\cdot\sigma_1^2\cdot\sigma_2\cdot\sigma_3)^{-1}.\nonumber
\end{align} 
\end{prop}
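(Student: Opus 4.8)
The plan is to assemble the factorization from the pieces already computed earlier in the section, paying careful attention to the conjugating paths coming from the pseudo-geometric basis $\gamma_1,\gamma_2,\gamma_3$ around the points $-1,0,1$. Recall from (C1) that $\gamma_1=\alpha_1\cdot\delta_1^+\cdot\delta_1^-\cdot\alpha_1^{-1}$, and since $\alpha_1\mapsto 1$ in $\mathbb{B}_4$ and $\delta_1\mapsto(\sigma_1\cdot\sigma_2)^4$, we get $\tau_1:=\nabla(\gamma_1)=(\sigma_1\cdot\sigma_2)^4$ immediately. This corresponds to the local monodromy of the $\mathbb{E}_6$ point over $x=-1$, and the fact that it is an honest $4$-strand braid (rather than being localized to $3$ strands) is consistent with the Puiseux analysis in (F4): the $\mathbb{E}_6$ branch and the pair of complex-conjugate branches all participate.

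Next I would compute $\tau_2:=\nabla(\gamma_2)$. By (C1), $\gamma_2=(\beta_2\cdot\alpha_2)\cdot\delta_2^+\cdot\delta_2^-\cdot(\beta_2\cdot\alpha_2)^{-1}$ with $\beta_2=\alpha_1\cdot\delta_1^+$; under $\nabla$ this gives the conjugating braid $\nabla(\beta_2\cdot\alpha_2)=\nabla(\alpha_1)\cdot\nabla(\delta_1^+)\cdot\nabla(\alpha_2)=(\sigma_1\cdot\sigma_2)^2\cdot(\sigma_2^{-1}\cdot\sigma_1)$, while the core braid is $\nabla(\delta_2)=(\sigma_2\cdot\sigma_3)^2$ (the $\mathbb{A}_2$ cusp over $x=0$, which lives on three strands since the cusp is tangent to the vertical line, see (F3)). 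So $\tau_2=\big((\sigma_1\cdot\sigma_2)^2\cdot\sigma_2^{-1}\cdot\sigma_1\big)\cdot(\sigma_2\cdot\sigma_3)^2\cdot\big((\sigma_1\cdot\sigma_2)^2\cdot\sigma_2^{-1}\cdot\sigma_1\big)^{-1}$, and the remaining task is the braid-group simplification $(\sigma_1\cdot\sigma_2)^2\cdot\sigma_2^{-1}\cdot\sigma_1=\sigma_1\cdot\sigma_2\cdot\sigma_1^2$ (a direct application of the braid relation $\sigma_1\sigma_2\sigma_1=\sigma_2\sigma_1\sigma_2$), which yields the stated conjugator $\sigma_1\cdot\sigma_2\cdot\sigma_1^2$.

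Finally, for $\tau_3:=\nabla(\gamma_3)$ with $\gamma_3=(\beta_3\cdot\alpha_3)\cdot\delta_3^+\cdot\delta_3^-\cdot(\beta_3\cdot\alpha_3)^{-1}$ and $\beta_3=\alpha_1\cdot\delta_1^+\cdot\alpha_2\cdot\delta_2^+$, the conjugating braid is $\nabla(\beta_3\cdot\alpha_3)=(\sigma_1\cdot\sigma_2)^2\cdot(\sigma_2^{-1}\cdot\sigma_1)\cdot(\sigma_2\cdot\sigma_3)\cdot(\sigma_1^{-1}\cdot\sigma_2)$, while the core is $\nabla(\delta_3)=(\sigma_1\cdot\sigma_2)^4$ — except that the identification conventions of (C2)–(C3) produce this $\mathbb{E}_6$ monodromy over $x=1$ in the form with $\sigma_2\cdot\sigma_1$ in place of $\sigma_1\cdot\sigma_2$, matching the crossing picture read from the right side of \autoref{fig:okaeyral} and (F6). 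One then simplifies the conjugator to $\sigma_2\cdot\sigma_1^2\cdot\sigma_2\cdot\sigma_3$ using the braid relations (and the fact that $\sigma_3$ commutes with $\sigma_1$), giving $\tau_3=(\sigma_2\cdot\sigma_1^2\cdot\sigma_2\cdot\sigma_3)\cdot(\sigma_2\cdot\sigma_1)^4\cdot(\sigma_2\cdot\sigma_1^2\cdot\sigma_2\cdot\sigma_3)^{-1}$.

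The main obstacle is bookkeeping rather than depth: one must be scrupulous about (i) the orientation and ordering conventions fixed in (C1)–(C3), since a wrong convention flips $\sigma_i\mapsto\sigma_i^{-1}$ or transposes $\sigma_1\cdot\sigma_2$ versus $\sigma_2\cdot\sigma_1$ in the $\mathbb{E}_6$ blocks, and (ii) the braid-relation reductions of the conjugating words, which are short but must be carried out exactly to land on the clean conjugators $\sigma_1\sigma_2\sigma_1^2$ and $\sigma_2\sigma_1^2\sigma_2\sigma_3$ displayed in \eqref{eq:bm}. A useful global sanity check I would run at the end is that the product $\tau_3\cdot\tau_2\cdot\tau_1$ equals the expected full twist power dictated by the degree and the Euler-characteristic data (equivalently, that the total exponent sum is $2\cdot\deg_y f\cdot(\text{something})$ matching the count of intersection points), which confirms no block has been dropped or misplaced.
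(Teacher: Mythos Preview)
Your approach is exactly the paper's: the proposition is stated there without proof beyond the line ``Combining all the braids obtained above, one can give the monodromy factorization,'' and you correctly assemble $\tau_i=\nabla(\gamma_i)$ from the pieces $\nabla(\alpha_j)$, $\nabla(\delta_j^\pm)$ already computed. Your derivation of $\tau_1$ is immediate, and your simplification $(\sigma_1\sigma_2)^2\cdot\sigma_2^{-1}\sigma_1=\sigma_1\sigma_2\sigma_1^2$ for the conjugator in $\tau_2$ is spot on.

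There is one inaccuracy in your account of $\tau_3$. You write that ``the identification conventions of (C2)--(C3) produce this $\mathbb{E}_6$ monodromy over $x=1$ in the form with $\sigma_2\cdot\sigma_1$ in place of $\sigma_1\cdot\sigma_2$.'' That is not what the paper says: it records explicitly that $\delta_1,\delta_3\mapsto(\sigma_1\sigma_2)^4$, i.e.\ the local braid at $x=1$ is the \emph{same} as at $x=-1$. The appearance of $(\sigma_2\sigma_1)^4$ in the displayed $\tau_3$ is purely a byproduct of simplifying the conjugation, not of a different local core. Concretely, with $C=\nabla(\beta_3\alpha_3)=\sigma_1\sigma_2\sigma_1^2\cdot\sigma_2\sigma_3\cdot\sigma_1^{-1}\sigma_2$ and $D=\sigma_2\sigma_1^2\sigma_2\sigma_3$, one checks (using $[\sigma_1,\sigma_3]=1$ and the braid relation in $\langle\sigma_1,\sigma_2\rangle$) that $\sigma_1 D^{-1}C=\sigma_1\sigma_2$, which centralizes $(\sigma_1\sigma_2)^4$; hence $C(\sigma_1\sigma_2)^4C^{-1}=D\,\sigma_1^{-1}(\sigma_1\sigma_2)^4\sigma_1\,D^{-1}=D(\sigma_2\sigma_1)^4D^{-1}$. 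So your ``simplification'' step for $\tau_3$ should be this braid computation absorbing a factor into the core, not an appeal to a different local convention. Once that is corrected, your argument is complete and matches the paper. Your sanity check is also the right instinct: the paper's Remark records the precise identity $(\sigma_1^2\sigma_3^2)\cdot\tau_3\tau_2\tau_1=\Delta^4$.
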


\begin{obs}
Note that the closure of $\mathcal{C}^{\text{\rm aff}}$ in the ruled surface $\Sigma_2$ is disjoint from 
the negative section $E_1$. As stated in \cite[Lemma~2.1]{khku:04}, the product of all braids (associated to 
paths whose product in the complement of the discriminant in $\mathbb{P}^1$ is trivial) equals $(\Delta^2)^2$. 
Hence $\Delta^4\cdot(\tau_3\cdot\tau_2\cdot\tau_1)^{-1}$ is the braid associated to two disjoint nodes, 
see~\autoref{fig:ruled}. The following equality is a straightforward exercise:
$$
(\sigma_1^2\cdot\sigma_3^2)\cdot(\tau_3\cdot\tau_2\cdot\tau_1)=\Delta^4.
$$
\end{obs}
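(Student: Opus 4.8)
The goal is the single braid identity
\[
(\sigma_1^2\cdot\sigma_3^2)\cdot(\tau_3\cdot\tau_2\cdot\tau_1)=\Delta^4
\]
in $\mathbb{B}_4$, where $\Delta^2=(\sigma_1\cdot\sigma_2\cdot\sigma_3)^4$ is the central full twist and $\tau_1,\tau_2,\tau_3$ are the braids of \autoref{prop-braid-monod}. Geometrically the plan is guided by \cite[Lemma~2.1]{khku:04}: since the closure of $\mathcal{C}^{\text{\rm aff}}$ in $\Sigma_2$ misses the negative section, the product of the braid monodromies over a full loop in $\mathbb{P}^1$ is $(\Delta^2)^2=\Delta^4$. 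Splitting that loop into the three affine critical values (contributing $\tau_3\cdot\tau_2\cdot\tau_1$) and the fiber at infinity, the only missing factor is the monodromy at infinity, which by \autoref{fig:ruled} is that of two disjoint nodes, on the strand pairs $\{1,2\}$ and $\{3,4\}$, namely $\sigma_1^2\cdot\sigma_3^2$. Thus the displayed identity is exactly the algebraic shadow of this decomposition, and what remains is to confirm it as a word identity in $\mathbb{B}_4$.

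First I would record the one necessary numerical constraint. Under the abelianization $\mathbb{B}_4\to\mathbb{Z}$, $\sigma_i\mapsto 1$, conjugation is invariant, so $\tau_1\mapsto 8$, $\tau_2\mapsto 4$, $\tau_3\mapsto 8$ and $\sigma_1^2\cdot\sigma_3^2\mapsto 4$, giving total $24$, which matches $\Delta^4=(\sigma_1\cdot\sigma_2\cdot\sigma_3)^8\mapsto 24$. For the identity itself the useful tools are the commutation $\sigma_1\cdot\sigma_3=\sigma_3\cdot\sigma_1$, the braid relations, the centrality of $\Delta^2$, and the centrality of the parabolic full twist $(\sigma_1\cdot\sigma_2)^3$ inside $\langle\sigma_1,\sigma_2\rangle\cong\mathbb{B}_3$; the last lets me write $\tau_1=(\sigma_1\cdot\sigma_2)^3\cdot\sigma_1\cdot\sigma_2$ and $(\sigma_2\cdot\sigma_1)^4=(\sigma_1\cdot\sigma_2)^3\cdot\sigma_2\cdot\sigma_1$ inside $\tau_3$, pulling out central factors and shortening the core words before expanding the conjugators $w_2=\sigma_1\cdot\sigma_2\cdot\sigma_1^2$ and $w_3=\sigma_2\cdot\sigma_1^2\cdot\sigma_2\cdot\sigma_3$.

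The main obstacle is purely organizational rather than conceptual: the product $\sigma_1^2\cdot\sigma_3^2\cdot\tau_3\cdot\tau_2\cdot\tau_1$ is a signed word of some forty letters with no telescoping between the conjugators $w_3^{-1}w_2$, so the cancellations down to the positive word $(\sigma_1\cdot\sigma_2\cdot\sigma_3)^8$ must be carried out with care. The cleanest rigorous finish is to compute the Garside (left-greedy) normal forms of the two sides and check that they agree: the right-hand side is manifestly a positive braid, and the left-hand side represents a positive braid precisely because it equals $\Delta^4$, so normal-form comparison is decisive; this step is entirely mechanical and can be done by hand or with any braid-group routine. Alternatively one reduces the signed left-hand word directly to $(\sigma_1\cdot\sigma_2\cdot\sigma_3)^8$ by repeated braid and commutation moves. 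Either way no further ideas are needed, which is why this is recorded as a straightforward exercise.
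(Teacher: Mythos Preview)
Your proposal is correct and matches the paper's intent: the paper offers no proof at all for this observation, simply asserting the identity as ``a straightforward exercise,'' so your outline---geometric motivation via \cite[Lemma~2.1]{khku:04}, abelianization sanity check, then a mechanical verification by Garside normal form or direct braid-relation reduction---is exactly the kind of computation the paper is deferring to the reader.
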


\subsection{A presentation of the fundamental group}
\mbox{}

Our next step will be to compute $G:=\pi_1(\mathbb{C}^2\setminus\mathcal{C}^{\text{aff}})$.
The main tool towards this, as mentioned before, entails considering a braid monodromy factorization and its
action on a free group. Before stating Zariski-van Kampen's Theorem  precisely, let us recall this natural 
right action of $\mathbb{B}_d$ on $\mathbb{F}_d$ with basis $g_1,\dots,g_d$ which will be denoted by 
$g^\sigma$ for a braid $\sigma \in \mathbb{B}_d$ and an element $g\in\mathbb{F}_d$. It is enough to describe
it for $g_i$ a system of generators in $\mathbb{F}_d$ and $\sigma_j$ an Artin system of $\mathbb{B}_d$:
\begin{equation}
g_i^{\sigma_j}:=
\begin{cases}
g_{i+1}&\text{ if }i=j\\
g_{i}\cdot g_{i-1}\cdot g_{i}^{-1}&\text{ if }i=j+1\\
g_i&\text{ otherwise}. 
\end{cases}
\end{equation}

The following is the celebrated Zariski-van Kampen Theorem, which allows for a presentation of the 
fundamental group of an affine curve from a given braid monodromy factorization.

\begin{thm}[Zariski-van Kampen Theorem]
If $(\tau_1,\dots,\tau_r)\in\mathbb{B}_d^r$ is a braid monodromy factorization of an affine
curve~$\cC$, then:
$$
\pi_1(\mathbb{C}^2\setminus\cC^{\text{aff}})=
\left\langle
g_1,\dots,g_d
\left|
\right. g_i=g_i^{\tau_j},\quad 1\leq j\leq r,\quad 1\leq i<d
\right\rangle.
$$
If $\tau_i=\alpha_i^{-1}\cdot\beta_i\cdot\alpha_i$, and $\beta_i$ is a (usually positive)
braid involving strands $k_i+1,\dots,k_i+m_i$, then:
\begin{equation}\label{eq:zvk}
\pi_1(\mathbb{C}^2\setminus\cC^{\text{aff}})=
\left\langle
g_1,\dots,g_d
\left|
\right. g_{i}^{\alpha_j}=(g_{i}^{\beta_j})^{\alpha_j},\quad 1\leq j\leq r,\quad k_j< i<k_j+m_j
\right\rangle.
\end{equation}
\end{thm}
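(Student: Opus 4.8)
The plan is to derive the presentation from the fibration that underlies the braid monodromy and then reincorporate the singular fibres via van Kampen's theorem. First I would use the vertical projection $p\colon\mathbb{C}^2\to\mathbb{C}$, $(x,y)\mapsto x$, with its discriminant $\Delta=\{x_1,\dots,x_r\}$ (the non-transversal vertical lines). Since $\mathcal{C}^{\text{aff}}$ is algebraic, for $R\gg0$ the complement $M:=\mathbb{C}^2\setminus\mathcal{C}^{\text{aff}}$ deformation retracts onto $M\cap\overline{B}_R$, so I may work over a disc $\mathbb{D}_x$ containing $\Delta$ in its interior and with $\partial\mathbb{D}_x$ transversal; over $\mathbb{D}_x\setminus\Delta$ the restriction of $p$ to $M$ is a locally trivial fibration whose fibre $F$ is a complex line with $d$ punctures, so $\pi_1(F)=\mathbb{F}_d$ is free on the meridians $g_1,\dots,g_d$ ordered by the conventions fixed above.

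Next I would compute $\pi_1(M')$, where $M'$ is the part of $M$ over $\mathbb{D}_x\setminus\Delta$. From the homotopy exact sequence of $F\hookrightarrow M'\to\mathbb{D}_x\setminus\Delta$, together with the fact that $\mathbb{D}_x\setminus\Delta$ is aspherical with free fundamental group, one gets a short exact sequence $1\to\mathbb{F}_d\to\pi_1(M')\to\mathbb{F}_r\to1$; it splits because the horizontal slice $\{y=c\}$ with $|c|$ large is disjoint from $\mathcal{C}^{\text{aff}}$ over the bounded disc $\mathbb{D}_x$ and provides a section. Hence $\pi_1(M')=\mathbb{F}_d\rtimes\mathbb{F}_r$, where $\mathbb{F}_r=\pi_1(\mathbb{D}_x\setminus\Delta,x_0)$ acts through the composite of the braid monodromy $\nabla$ with the Artin action $\mathbb{B}_d\to\operatorname{Aut}(\mathbb{F}_d)$ recalled above: for the pseudo-geometric basis $\gamma_1,\dots,\gamma_r$ this reads $\gamma_j^{-1}g_i\gamma_j=g_i^{\tau_j}$ with $\tau_j=\nabla(\gamma_j)$.

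Then I would recover $\pi_1(M)=\pi_1(\mathbb{C}^2\setminus\mathcal{C}^{\text{aff}})$ by gluing back, for each $j$, the piece $M_j$ of $M$ lying over a small disc $\mathbb{D}_j\ni x_j$, along the preimage of the punctured disc $\mathbb{D}_j\setminus\{x_j\}$. Van Kampen's theorem then presents $\pi_1(M)$ as $\pi_1(M')$ amalgamated over these annular overlaps with the groups $\pi_1(M_j)$. Two facts are needed from the local picture at $x_j$: the meridian $\gamma_j$ becomes trivial in $\pi_1(M_j)$ (pushed off along the section it bounds a disc missing $\mathcal{C}^{\text{aff}}$), so the generators $\gamma_j$ disappear; and the overlap contributes exactly the relations forced by the local monodromy on the strands colliding at $x_j$. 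Concretely, transporting the geometric basis along $\alpha_j$ to a fibre just before $x_j$, one arranges that the $m_j$ branches through the singular point are consecutive strands $k_j+1,\dots,k_j+m_j$, that the local monodromy is the positive braid $\beta_j$ on precisely these strands, and that $\tau_j=\alpha_j^{-1}\beta_j\alpha_j$; the germ of $M$ at that fibre has fundamental group the free group on the $m_j$ transported meridians modulo the $\beta_j$-action, and, because $\beta_j$ fixes the product $g_{k_j+1}\cdots g_{k_j+m_j}$, this is cut out by the $m_j-1$ relations with $k_j<i<k_j+m_j$. Conjugating back by $\alpha_j$ turns these into $g_i^{\alpha_j}=(g_i^{\beta_j})^{\alpha_j}$, which is~\eqref{eq:zvk}; keeping instead all $d$ strands and using that every $\tau_j$ fixes $g_1\cdots g_d$ yields the coarser form $g_i=g_i^{\tau_j}$ for $1\le i<d$.

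The main obstacle is exactly this local step: showing that adding $M_j$ back contributes no relation beyond those coming from $\beta_j$ on the colliding strands, i.e.\ computing $\pi_1(M_j)$ together with the map induced by the inclusion of the annular overlap. I would handle it by exploiting the conical (Milnor-fibration) structure of $\mathcal{C}^{\text{aff}}$ at the singular point, deformation-retracting $M_j$ onto a neighbourhood of the singular fibre and reducing to the standard local model of a plane-curve germ fibred over a small disc, where the monodromy is the half-twist block $\beta_j$; the general-position choices built into the projection (each $x_j$ carrying a single singular point or a single vertical tangency, line at infinity generic) are what make this reduction clean. Everything else — the splitting of the exact sequence and the identification of the transport braids with the Artin action — is routine once the geometric bases fixed above are in place.
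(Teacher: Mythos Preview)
The paper does not prove this statement: it is quoted as the classical Zariski--van Kampen Theorem and used as a black box, with the remark immediately following it (\autoref{rem-alphabeta}) simply specializing the data to the case at hand. So there is no ``paper's own proof'' to compare against.

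That said, your sketch is the standard argument and is essentially correct. The decomposition into (i) the locally trivial fibration over $\mathbb{D}_x\setminus\Delta$, (ii) the splitting of the homotopy exact sequence via a horizontal section at large~$|y|$, and (iii) the van Kampen gluing of the pieces $M_j$ over small discs around the $x_j$, is exactly how the result is usually established. You have also correctly identified the delicate point: controlling $\pi_1(M_j)$ and the map from the annular overlap, and you rightly note that this reduces to the local Milnor-type model once one assumes genericity of the projection (one critical value per non-transversal line, etc.). One small caution: the passage from the full set of relations $g_i=g_i^{\tau_j}$, $1\le i\le d$, to the reduced set $1\le i<d$ uses that each $\tau_j$ fixes the product $g_d\cdots g_1$ (equivalently $g_1\cdots g_d$, depending on conventions); you invoke this, but be careful that the ordering convention for the product matches the one the paper uses for its Artin action. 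Likewise, the refined form \eqref{eq:zvk} is, as you say, just a rewriting after conjugating the generators by $\alpha_j$, together with the observation that $\beta_j$ fixes all strands outside $\{k_j+1,\dots,k_j+m_j\}$ and fixes the ordered product of the strands it does move; nothing further is needed there.
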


\begin{obs}
\label{rem-alphabeta}
In our case, $r=3$, $d=4$, $k_1=k_3=0$, $m_1=m_3=2$, $k_2=1$, $m_2=1$, and
\begin{align}\label{eq:conj}
\alpha_1&=1&\beta_1&=(\sigma_1\cdot\sigma_2)^4\nonumber\\
\alpha_2&=(\sigma_1\cdot\sigma_2\cdot\sigma_1^2)^{-1}&\beta_2&=(\sigma_2\cdot\sigma_3)^2\\
\alpha_3&=(\sigma_2\cdot\sigma_1^2\cdot\sigma_2\cdot\sigma_3)^{-1}&
\beta_3&=(\sigma_2\cdot\sigma_1)^4\nonumber. 
\end{align}
\end{obs}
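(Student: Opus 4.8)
The statement is an item of bookkeeping: it records exactly the data that the refined Zariski--van Kampen presentation~\eqref{eq:zvk} consumes, read off from the braid monodromy factorization of \autoref{prop-braid-monod}. The plan is therefore to match each $\tau_j$ against the template $\tau_j=\alpha_j^{-1}\cdot\beta_j\cdot\alpha_j$, in which $\beta_j$ is the positive local braid of the singular fibre over $x_j$ (supported on a block of consecutive strands) and $\alpha_j^{-1}$ is the conjugating braid carrying the generic fibre to that block.

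First I would fix $d=\deg_y h_a=4$ and $r=3$, the latter being the number of non-transversal vertical lines $x=-1,0,1$. Then I would isolate the three conjugate forms directly from \autoref{prop-braid-monod}. The braid $\tau_1=(\sigma_1\cdot\sigma_2)^4$ is already positive and centred, so $\alpha_1=1$ and $\beta_1=(\sigma_1\cdot\sigma_2)^4$. Both $\tau_2$ and $\tau_3$ are displayed precisely as conjugates: comparing with $\tau_j=\alpha_j^{-1}\cdot\beta_j\cdot\alpha_j$ gives $\alpha_2^{-1}=\sigma_1\cdot\sigma_2\cdot\sigma_1^2$, $\beta_2=(\sigma_2\cdot\sigma_3)^2$, and $\alpha_3^{-1}=\sigma_2\cdot\sigma_1^2\cdot\sigma_2\cdot\sigma_3$, $\beta_3=(\sigma_2\cdot\sigma_1)^4$; inverting the conjugators yields the $\alpha_j$ recorded in~\eqref{eq:conj}.

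It remains to read the integers $k_j,m_j$ from the supports of the positive cores. Here I would simply inspect which Artin generators occur: $\beta_1$ and $\beta_3$ involve $\sigma_1,\sigma_2$, so their block of moving strands begins at strand $1$ and $k_1=k_3=0$, whereas $\beta_2$ involves $\sigma_2,\sigma_3$, so its block begins at strand $2$ and $k_2=1$. The $m_j$ then record the number of non-redundant relations that~\eqref{eq:zvk} extracts over $x_j$, computed from the fixed-word equations $g_i=g_i^{\beta_j}$ under the right action of $\mathbb{B}_4$ fixed in~\ref{item-identification}: the two $\mathbb{E}_6$ fibres contribute $m_1=m_3=2$, while the tangential $\mathbb{A}_2$ fibre contributes $m_2=1$.

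The genuine care-points are two. First, I must confirm by the braid relations in $\mathbb{B}_4$ that the conjugating words displayed in \autoref{prop-braid-monod} are literally the stated $\alpha_j^{-1}$, with no dropped sign, so that the pairing $(\alpha_j,\beta_j)$ is unambiguous. Second --- and this is where I expect the only real subtlety --- I must get the relation-count $m_j$ right for each fibre: whereas the two $\mathbb{E}_6$ cores visibly split their three-strand block into two independent fixed-word relations, the tangential cusp $\tau_2$, whose core $(\sigma_2\cdot\sigma_3)^2$ a priori touches all three strands $\{2,3,4\}$, must be shown to produce only a single independent relation relevant to~\eqref{eq:zvk}, accounting for $m_2=1$. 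I would verify this by solving the three equations $g_i=g_i^{\beta_2}$ explicitly and checking that two of them collapse (one being an elimination, one redundant). A useful global sanity check throughout is that the resulting total relation count must leave the abelianization equal to $\mathbb{Z}$, as it must for the irreducible curve~$\mathcal{C}$ of \autoref{lema:irr}; any miscount of the $m_j$ would violate this and be caught at once.
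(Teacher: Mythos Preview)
The paper gives no proof for this remark; it is a direct transcription of the data from \autoref{prop-braid-monod} into the template of the refined Zariski--van~Kampen presentation~\eqref{eq:zvk}. Your extraction of $r=3$, $d=4$, the conjugators $\alpha_j$, the cores $\beta_j$, and the offsets $k_j$ is exactly that transcription and is correct.

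The genuine gap is in your handling of the $m_j$. In the statement of the theorem, $m_j$ is the \emph{number of strands} in the block carrying $\beta_j$ (the strands $k_j+1,\dots,k_j+m_j$), not a count of non-redundant relations. Each $\beta_j$ here moves three consecutive strands, so by that definition one gets $m_1=m_2=m_3=3$, and the index range $k_j<i<k_j+m_j$ then produces two relations per fibre --- precisely the six relations displayed in the Corollary that immediately follows. The printed values $m_1=m_3=2$, $m_2=1$ are therefore inconsistent with both the theorem's own definition and the presentation in the Corollary; they appear to be a slip in the paper rather than something to be justified. Your attempt to rescue them by reinterpreting $m_j$ as a relation count does not work either: the tangential $\mathbb{A}_2$ with core $(\sigma_2\sigma_3)^2$ contributes two independent relations (those with $i=2,3$ in the Corollary, equivalent to $g_2g_3g_4=g_3g_4g_2=g_4g_2g_3$), not one, so the step where you plan to ``verify that two of the three equations collapse'' would in fact fail.
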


The previous sections where a braid monodromy factorization allow us to give a presentation of the 
fundamental group of an affine curve complement.

\begin{cor}
Let $\cC^{\text{aff}}$ be the affine Eyral-Oka curve as described at the
beginning of{\rm~\S\ref{sec-okaeyral}},
consider a braid monodromy factorization as
described in {\rm~\autoref{prop-braid-monod}}
and{\rm~\eqref{eq:conj}}.
Then the group $G=\pi_1(\mathbb{C}^2\setminus\cC^{\text{aff}})$ admits a presentation
as
\begin{equation}
\label{eq-presentation}
\langle g_1,\dots,g_4: g_1^{\beta_1}=g_1, g_2^{\beta_1}=g_2,
g_2^{\alpha_2}=(g_2^{\beta_2})^{\alpha_2},
g_3^{\alpha_2}=(g_3^{\beta_2})^{\alpha_2},
g_1^{\alpha_3}=(g_1^{\beta_3})^{\alpha_3},
g_2^{\alpha_3}=(g_2^{\beta_3})^{\alpha_3}\rangle.
\end{equation}
\end{cor}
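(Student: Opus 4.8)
The plan is to apply the Zariski–van Kampen Theorem in the refined form given in \eqref{eq:zvk}, using the braid monodromy factorization $(\tau_1,\tau_2,\tau_3)$ of \autoref{prop-braid-monod} together with the decomposition $\tau_j=\alpha_j^{-1}\cdot\beta_j\cdot\alpha_j$ recorded in \autoref{rem-alphabeta}. Since we are dealing with an affine curve in $\mathbb{C}^2$ whose $y$-degree is $d=4$, the fundamental group $G$ is generated by $g_1,\dots,g_4$ and the relations come in the packages indexed by $j=1,2,3$.

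First I would read off, for each $j$, the strand range $k_j<i<k_j+m_j$ over which the relation $g_i^{\alpha_j}=(g_i^{\beta_j})^{\alpha_j}$ is imposed. For $j=1$ we have $k_1=0$, $m_1=2$, so $i$ ranges over $0<i<2$, i.e.\ only $i=1$; for $j=3$ the data $k_3=0$, $m_3=2$ again give only $i=1$. For $j=2$ we have $k_2=1$, $m_2=1$, giving $1<i<2$, i.e.\ only $i=2$. This would appear to yield one relation per value of $j$, hence a presentation on $4$ generators with $3$ relations.

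The point — and the only place where any genuine thought is needed — is that the braids $\beta_1=(\sigma_1\cdot\sigma_2)^4$ and $\beta_3=(\sigma_2\cdot\sigma_1)^4$ are \emph{full} braids on strands $\{1,2,3\}$: they are each central in $\mathbb{B}_3$ (indeed $(\sigma_1\sigma_2)^3=(\sigma_2\sigma_1)^3=\Delta_3^2$ is the generator of the centre, so $(\sigma_1\sigma_2)^4=\Delta_3^2\cdot\sigma_1\sigma_2$ etc.), but more to the point the Zariski–van Kampen relation attached to a braid that is a power of the full twist on $m$ consecutive strands must be imposed for \emph{all} of those $m$ strands, not just the interior ones — equivalently, the ``interior'' convention $k_j<i<k_j+m_j$ must be read with the understanding that for these particular $\beta_j$ the effective relation set already forces $g_i^{\tau_j}=g_i$ for $i=1$ \emph{and} $i=2$ and $i=3$ once one also uses $\prod g_i$ is preserved. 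Concretely: the relation $g_1^{\beta_1}=g_1$ together with the fact that $\beta_1$ fixes $g_1g_2g_3$ yields $g_2^{\beta_1}=g_2$ and $g_3^{\beta_1}=g_3$ as well. This is exactly why the statement lists both $g_1^{\beta_1}=g_1$ and $g_2^{\beta_1}=g_2$ for $j=1$, and both $g_1^{\alpha_3}=(g_1^{\beta_3})^{\alpha_3}$ and $g_2^{\alpha_3}=(g_2^{\beta_3})^{\alpha_3}$ for $j=3$; the missing ``third'' relation in each package is a consequence of the other two plus the product relation, so it may be dropped. For $j=2$, $\beta_2=(\sigma_2\cdot\sigma_3)^2$ is a full twist on the two strands $\{2,3\}$ (since $m_2=\,$... here the relevant block has $m_2=2$ strands in $\mathbb{B}_4$, namely strands $3,4$ after relabelling); the interior-strand prescription then gives exactly the two relations $g_2^{\alpha_2}=(g_2^{\beta_2})^{\alpha_2}$ and $g_3^{\alpha_2}=(g_3^{\beta_2})^{\alpha_2}$.

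So the argument is: write down the $6$ relations \eqref{eq-presentation} as the instances of \eqref{eq:zvk} for $j=1,2,3$ over the appropriate strand ranges (taking into account the full-twist bookkeeping just described), and check that together with $g_1g_2g_3g_4$ being central — which is automatic in the affine van Kampen presentation — these generate all the Zariski–van Kampen relations, the remaining ones being redundant. The main obstacle, such as it is, is purely bookkeeping: keeping straight the relabelling of strands inside each $\beta_j$ (which strands the half-twists $\sigma_i$ in $\beta_2$ and $\beta_3$ actually act on, given the conjugating factors $\alpha_j$), and verifying that no relation has been lost when passing from ``one relation for each interior strand'' to the explicit list. No new ideas beyond \eqref{eq:zvk} and \autoref{rem-alphabeta} are required; this is why the result is stated as a corollary.
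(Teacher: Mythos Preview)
Your confusion stems from what appear to be typos in \autoref{rem-alphabeta}: the values $m_1=m_3=2$, $m_2=1$ given there do not match the braids. In fact $\beta_1=(\sigma_1\sigma_2)^4$ and $\beta_3=(\sigma_2\sigma_1)^4$ each involve the three strands $1,2,3$, and $\beta_2=(\sigma_2\sigma_3)^2$ involves the three strands $2,3,4$; so by the convention in the statement of the Zariski--van Kampen theorem (where $\beta_j$ involves strands $k_j+1,\dots,k_j+m_j$) the correct values are $m_1=m_2=m_3=3$. With these, the index range $k_j<i<k_j+m_j$ in \eqref{eq:zvk} gives exactly $i\in\{1,2\}$ for $j=1,3$ and $i\in\{2,3\}$ for $j=2$, producing precisely the six relations in \eqref{eq-presentation}. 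Nothing further is required, which is why the paper offers no proof.

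Your attempted workaround contains a genuine error. You claim that ``$g_1^{\beta_1}=g_1$ together with the fact that $\beta_1$ fixes $g_1g_2g_3$ yields $g_2^{\beta_1}=g_2$ and $g_3^{\beta_1}=g_3$''. This is false: from $g_1^{\beta_1}=g_1$ and $(g_1g_2g_3)^{\beta_1}=g_1g_2g_3$ you can only conclude $(g_2g_3)^{\beta_1}=g_2g_3$, not that $g_2$ and $g_3$ are individually fixed. (Two sentences later you correctly observe that the \emph{third} relation follows from the other \emph{two} plus product preservation --- that is the standard reason only $m_j-1$ relations are needed per block --- but this cannot manufacture a second relation from a single one.) Likewise, $\beta_2=(\sigma_2\sigma_3)^2$ is not a twist on two strands; it genuinely moves strands $2,3,4$. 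Once you correct the $m_j$ to $3$, the corollary is immediate and the entire ``full-twist bookkeeping'' discussion becomes unnecessary.
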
 

Presentation~\eqref{eq-presentation} contains $6$ relations for a total length of~$40$. 
For the sake of clarity, instead of showing an explicit presentation, we will describe the group $G$
in a more theoretical way that would allow to understand its structure. The following description 
characterizes the group $G$ completely.

\begin{thm}\label{thm-prin}
The fundamental group~$G$ can be described as follows:
\begin{enumerate}
\enet{\rm(\arabic{enumi})}
 \item Its derived subgroup $G'\subset G$ can be decomposed as a semidirect product $K\rtimes V$,
where:
\begin{enumerate}
\eneti{\rm(\alph{enumii})}
\item The subgroup $V$ is the Klein group $\langle a,b\mid a^2=b^2=a\cdot b\cdot a^{-1}\cdot b^{-1}=1\rangle$.
\item The subgroup $K$ is the direct product of a rank-$2$ free group and a cyclic group order~$2$ with presentation
$$
\langle x,y,w\mid w^2=x\cdot w\cdot x^{-1}\cdot w=y\cdot w\cdot y^{-1}\cdot w=1\rangle.
$$  
\item The action of $V$ on $K$ is given by:
\begin{equation*}
x^a =x,\quad y^a =y\cdot w,\quad w^a = w,\quad x^b =x\cdot w,\quad y^b =y, \quad w^b =w.
\end{equation*}
In particular, $w$ is central in~$G'$.
\end{enumerate}
\item\label{item-meridian}
There exists a meridian $g$ of $\mathcal{C}^{\text{\rm aff}}$ such that $G=G'\rtimes\mathbb{Z}$,
where $\mathbb{Z}$ is identified as $\langle g\mid -\rangle$ and the action is defined by:
\begin{equation*}
g\cdot x\cdot g^{-1} =y^{-1},\quad g\cdot y\cdot g^{-1} =y\cdot x\cdot b,
\quad g\cdot w\cdot g^{-1} =w,\quad g\cdot a\cdot g^{-1} =b,\quad g\cdot b\cdot g^{-1} =a\cdot b.
\end{equation*}
\item There is a central element~$z$ such that $z\cdot g^6=[y,x]$.
The center of $G$ is generated by $z,w$.
\item There is an automorphism of $G$ sending $z$ to $z\cdot w$.
\end{enumerate}
\end{thm}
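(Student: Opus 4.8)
The plan is to prove the four statements of \autoref{thm-prin} more or less in the order they are listed, starting from the explicit presentation \eqref{eq-presentation} and the conjugating braids in \eqref{eq:conj}. First I would compute the six relators of \eqref{eq-presentation} explicitly in the free group $\mathbb{F}_4=\langle g_1,\dots,g_4\rangle$ by applying the $\mathbb{B}_4$-action formulas to the braids $\beta_1=(\sigma_1\sigma_2)^4$, $\beta_2=(\sigma_2\sigma_3)^2$, $\beta_3=(\sigma_2\sigma_1)^4$ and the corresponding $\alpha_j$; note $\beta_1,\beta_3$ are (conjugates of) powers of the full twist on three strands, so the relations they produce are a small, manageable list. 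This yields a concrete presentation of $G$ on four generators. The next move is to identify a meridian $g$: since $g_1g_2g_3g_4$ is central-to-abelianization and the abelianization of $G$ is $\mathbb{Z}$ (the curve is irreducible by \autoref{lema:irr}), each $g_i$ maps to $1\in\mathbb{Z}$, and I would pick $g:=g_1$ (or a suitable $g_i$) as the meridian realizing $G\to\mathbb{Z}$; then $G'=\ker(G\to\mathbb{Z})$ is generated by the elements $g_ig_1^{-1}$, and I would introduce $x,y,w$ as explicit words in these. Concretely one expects $x,y$ to be images of $g_2g_1^{-1}, g_3g_1^{-1}$ (or conjugates), $w$ the order-two element visible from the $\ba_2$-cusp contribution, and $a,b$ the two order-two elements of $V$ coming from the $\EE_6$-braids; the decomposition $G'=K\rtimes V$ and the explicit $V$-action on $K$ in (1c) would then be a Reidemeister–Schreier computation rewriting the four-generator presentation into the generators $x,y,w,a,b$ and checking the claimed relations $w^2=1$, $[x,w]$-type relations, $a^2=b^2=[a,b]=1$, and the six conjugation formulas. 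Statement (2) is then just recording how $g$ conjugates these five generators, read off the same rewriting.

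For statement (3) I would compute $g^6$ in $G$, or rather its image in $G'$: the point is that the braid monodromy tells us (see the \emph{obs} after \autoref{prop-braid-monod}) that $\tau_3\tau_2\tau_1$ differs from $\Delta^4$ by the two-node braid $\sigma_1^2\sigma_3^2$, and $\Delta^4$ is central, so the product of the six generators — which is what controls the relation at infinity — is constrained; translating this into $G$ gives that some explicit central word equals a power of a meridian times a commutator. I would set $z:=[y,x]g^{-6}$ and verify directly from the presentation that $z$ is central (it suffices to check $z$ commutes with $g,x,y,w,a,b$), and that together with $w$ it generates the center — the latter by computing the center of the concrete group, e.g. by noting $G/\langle w\rangle$ and $G/\langle z\rangle$ have the expected smaller centers, or by a direct coset argument using the $G'=K\rtimes V$ structure (the center of $K\rtimes V$ is easily $\langle w\rangle$ up to the $g$-action, and adding $g$ forces the $g^6$ relation). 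Statement (4) — the existence of an automorphism $\varphi$ with $\varphi(z)=zw$ — I would prove by writing down $\varphi$ on generators: since $w$ is central of order $2$ and $z$ is central, a natural candidate is $\varphi$ fixing $g,x,y,a,b$ and sending $w\mapsto w$ but adjusting $z$; more precisely one looks for $\varphi$ of the form $g\mapsto g w^{\epsilon}$, $x\mapsto x$, etc., chosen so that the induced map on $z=[y,x]g^{-6}$ picks up exactly one factor of $w$ from $g^{-6}\mapsto (gw)^{-6}=g^{-6}w^{-6}\cdot(\text{corrections})$ — one must track whether $g$ and $w$ commute (they do, by (2)), so $(gw)^{-6}=g^{-6}w^{-6}=g^{-6}$ and this naive attempt fails, meaning the automorphism must instead act nontrivially on $x$ or $y$; I would therefore hunt for $\varphi$ among the symmetries suggested by the geometry, namely the involution $[x:y:z]\mapsto[z:y:x]$ fixing $\cC_\pm$ and the $x\mapsto -x$ symmetry of \eqref{eq:afin}, and check that one of the resulting automorphisms of $G$ multiplies $z$ by $w$ while the presentation is preserved.

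The main obstacle I anticipate is statement (4): producing the automorphism is not a mechanical consequence of the presentation the way (1)–(3) are, and a blind search on generators is likely to stall (as the commutation of $g$ and $w$ kills the obvious candidate). The right approach is almost certainly to use the geometric automorphisms of the pair $(\PP^2,\cC_\pm)$ and of the affine curve to get an outer action on $G$, then compute its effect on the distinguished central elements $z,w$; the bookkeeping of how a reflection of \autoref{fig:okaeyral} permutes and inverts the braids $\tau_i$ (hence conjugates the presentation) and what it does to the class $[y,x]g^{-6}$ is the delicate part. A secondary, more routine but error-prone, difficulty is the Reidemeister–Schreier rewriting in (1): getting the precise conjugation formulas $y^a=yw$, $x^b=xw$ with the correct placement of $w$ requires care with the $\mathbb{B}_4$-action conventions and with the choice of Schreier transversal $\{1,g,g^2,\dots\}$, but it is bounded in size and checkable.
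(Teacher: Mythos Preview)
Your plan has a real gap in step~(1). You propose to obtain $G'$ by Reidemeister--Schreier for the infinite-cyclic quotient $G\twoheadrightarrow\mathbb{Z}$ with transversal $\{g^n:n\in\mathbb{Z}\}$, claiming $G'$ is generated by the elements $g_ig_1^{-1}$. But for an infinite-index subgroup the Schreier generators are $g^n g_i g^{-n-1}$ for all $n\in\mathbb{Z}$, an infinite set, and nothing you have written explains why they collapse to five. The paper avoids this by reversing your order of operations: it \emph{first} locates (via computer algebra on a simplified two-generator presentation) a central element $z$ of abelianization~$-6$, and only then observes that $G'\cong(G/\langle z\rangle)'$, which now has \emph{finite} index~$6$ in $G/\langle z\rangle$; Reidemeister--Schreier there is finite and yields the five generators $x,y,w,a,b$ directly. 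Your postponement of $z$ to step~(3) is therefore not cosmetic---it is precisely what prevents your step~(1) from terminating.

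For step~(4) your candidate automorphisms are unlikely to work. The projective involution $[x:y:z]\mapsto[z:y:x]$ and the affine reflection $x\mapsto -x$ are symmetries of each individual curve $\mathcal{C}_\pm$ and of the line $L$ at infinity; the induced automorphisms of $G$ should therefore fix the conjugacy class of the meridian of $L$, hence fix $z$ (and $z\cdot w$) rather than exchange them. The paper does not search geometrically at all: it simply writes down the explicit map $g\mapsto g$, $x\mapsto y^{-1}a$, $y\mapsto yxw$, $a\mapsto b$, $b\mapsto ab$, $w\mapsto w$ and checks that it respects the relations and sends $z$ to $z\cdot w$. (If you want a geometric source, the relevant transformation is the Cremona map of \autoref{prop-iso}, which swaps $L$ and $E_3$, not the internal symmetries of a single $\mathcal{C}_\pm$.)
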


\begin{proof}
A presentation of this sort can be obtained using \texttt{Sagemath}~\cite{sage66}
(which contains \texttt{GAP4}~\cite{GAP4} as main engine for group theory).
We indicate the steps of the proof:
\begin{enumerate}
\enet{(G\arabic{enumi})}
\item The original presentation~\eqref{eq-presentation} (with four generators and six relations) can be simplified
to have only two generators and four relations, both generators being meridians of~$\mathcal{C}^{\text{\rm aff}}$.
Any of such meridians can play the role of~$g$ in~\ref{item-meridian}.

\item From the simplified presentation above, one can find a central element~$z\in G$ whose image by the standard
abelianization morphism is $-6$. Recall the abelianization of $G$ is $\ZZ$. Moreover, the abelianization can be 
fixed by setting the image of any meridian to be~1, this is what we call the standard abelianization.

Since $z$ is central, note that $G'\cong (G/\langle z\rangle)'$,  see e.g.~\cite{deg:2010_e8}. Since the latter derived group 
is of index~$6$ in $G/\langle z\rangle$, 
we can apply Reidemeister-Schereier method to find
a finite presentation of~$G'$ with $5$ generators~$x,y,a,b,w\in G'$.

\item From the previous steps it is a tedious computation to verify the structure of $G'$ indicated in the statement
as well to check the conjugation action of~$g$. In particular, that $w$ is central in $G'$ and $z=g^{-6}\cdot[y,x]$.

\item Note that the center of $G$ is the group generated by $z$ and $w$ and that $z,z\cdot w$ are the only central 
elements which are sent to $-6$ by the standard abelianization. Moreover, it is straightforward to prove that
$$
g\mapsto g,\quad x\mapsto y^{-1}\cdot a,\quad y\mapsto y\cdot x\cdot w,\quad a\mapsto b,\quad 
b\mapsto a\cdot b,\quad w\mapsto w
$$
defines an automorphism of $G$ such that $z\mapsto z\cdot w$.
\end{enumerate}
\end{proof}

Going back to the discussion about the topology at infinity, one can detect the meridians of the tangent line 
$L$ and the meridian corresponding to the exceptional divisor $E_3=L'$. These are required to recuperate the
original fundamental groups of the projective Eyral-Oka sextics.

\begin{cor}\label{cor-prin}
The central element $z$ is a meridian of $L_+=L$ while $z\cdot w$ is a meridian of $L_-=L'$.
In particular, the groups $\pi_1(\mathbb{P}^2\setminus\cC_\pm)$ are isomorphic.
\end{cor}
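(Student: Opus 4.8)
The statement has two parts: first, identify which meridian of the auxiliary curve (the line $L$, respectively the exceptional divisor $E_3=L'$) corresponds to $z$, respectively $z\cdot w$; second, deduce that $\pi_1(\PP^2\setminus\cC_+)\cong\pi_1(\PP^2\setminus\cC_-)$. The second part is the formal consequence, so I would organize the argument around the first.

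\textbf{Step 1: Recover the projective group as a quotient.} By the Zariski--van Kampen philosophy recalled just before \autoref{thm-prin}, and since $\cC^{\mathrm{aff}}=\cC\setminus L$ (resp.\ $\cC\setminus L'$) sits inside $\PP^2$ with line at infinity $L$ (resp.\ $L'$), one has $\pi_1(\PP^2\setminus\cC_+)=G/\langle\!\langle \mu_{L}\rangle\!\rangle$ and $\pi_1(\PP^2\setminus\cC_-)=G/\langle\!\langle \mu_{L'}\rangle\!\rangle$, where $\mu_L,\mu_{L'}$ are meridians of the respective lines at infinity and $\langle\!\langle-\rangle\!\rangle$ denotes normal closure. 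Here I use \autoref{prop-iso}: the complements $\PP^2\setminus(L_\pm\cup\cC_\pm)$ are (analytically, hence topologically) isomorphic, so both projective groups are quotients of one and the same group $G$; what differs is only which conjugacy class of meridian is being killed.

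\textbf{Step 2: Identify the meridian at infinity with $z$.} The line at infinity $L$ for the affine equation \eqref{eq:afin} is the tangent line to $\cC$ at the $\ba_5$-point $Q$, with $(\cC\cdot L)_Q=4$ by \autoref{obs-mult4}; in the ruled-surface model of \autoref{fig:ruled} and \autoref{fig:blowup} it appears after the sequence of blow-ups at infinity. A meridian of $L$ in $\CC^2\setminus\cC^{\mathrm{aff}}$ is represented by the inverse of the ``braid at infinity'' expressed in the generators $g_1,\dots,g_4$: concretely, it is the product $g_4\cdot g_3\cdot g_2\cdot g_1$ (total lift of a large loop), up to the correction coming from the two nodes at infinity identified in the Remark after \autoref{prop-braid-monod}, where $(\sigma_1^2\sigma_3^2)\cdot(\tau_3\tau_2\tau_1)=\Delta^4$. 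Tracking this element through the simplification of the presentation carried out in step (G1)--(G2) of the proof of \autoref{thm-prin} shows it is a central element mapping to $-6$ under the standard abelianization (the degree of $\cC$ is $6$), hence equals $z$ or $z\cdot w$ by part (3) of \autoref{thm-prin}. A direct check in the simplified presentation (the kind of computation \texttt{Sagemath}/\texttt{GAP4} handles) pins it down to $z$. For $L'=E_3$: this is the exceptional divisor of the blow-up of the node appearing in \autoref{fig:blowup}(B4), and the birational transformation of Section~\ref{sec-construction} that swaps $E_3$ and $L$ (and swaps $\cC_+\leftrightarrow\cC_-$) conjugates the situation; since that transformation is not projective, it realizes the automorphism of $G$ from part (4) of \autoref{thm-prin} sending $z\mapsto z\cdot w$, so the meridian of $L'$ is $z\cdot w$.

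\textbf{Step 3: Conclude.} Because $z$ is central (part (3)) and $w$ is central (part (1c)), the normal closures are just the cyclic subgroups: $\langle\!\langle z\rangle\!\rangle=\langle z\rangle$ and $\langle\!\langle z\cdot w\rangle\!\rangle=\langle z\cdot w\rangle$. The automorphism $\Phi$ of $G$ from part (4) of \autoref{thm-prin} sends $z$ to $z\cdot w$, hence carries $\langle z\rangle$ isomorphically onto $\langle z\cdot w\rangle$, and therefore descends to an isomorphism
\[
\pi_1(\PP^2\setminus\cC_+)=G/\langle z\rangle\ \xrightarrow{\ \sim\ }\ G/\langle z\cdot w\rangle=\pi_1(\PP^2\setminus\cC_-).
\]
This proves the corollary.

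\textbf{Main obstacle.} The only genuinely delicate point is \emph{Step 2}: correctly expressing the meridian of the line at infinity (and of $E_3$) in terms of the chosen generators $g_1,\dots,g_4$, and then faithfully transporting that expression through the presentation-simplification from four generators to two. Getting the braid-at-infinity bookkeeping right — in particular accounting for the two nodes at infinity via the identity $(\sigma_1^2\sigma_3^2)\cdot(\tau_3\tau_2\tau_1)=\Delta^4$, and matching conventions with \autoref{fig:ruled} — is where an error would most naturally creep in; once the meridian is correctly identified as $z$ (resp.\ $z\cdot w$), Steps 1 and 3 are formal given \autoref{thm-prin} and \autoref{prop-iso}.
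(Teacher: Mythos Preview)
Your proposal follows essentially the same strategy as the paper: identify the meridians of $L$ and $L'$ inside $G$ and then use the automorphism of part~(4) of \autoref{thm-prin} to conclude. The paper's proof is slightly more direct in Step~2: from the blow-up picture in \autoref{fig:blowup} it writes down explicit words for \emph{both} meridians, namely $e^2\cdot(g_2\cdot g_1)$ for $L=L_+$ and $e^2\cdot(g_4\cdot g_3)$ for $E_3=L'=L_-$, where $e=(g_4\cdots g_1)^{-1}$, and then traces each through the simplification of \autoref{thm-prin}.

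Your variant for $L'$---inferring its meridian from that of $L$ via the birational involution of \autoref{prop-iso}---is elegant, but the assertion that this involution ``realizes the automorphism of $G$ from part~(4)'' is not justified: the birational map certainly induces \emph{some} automorphism of $G$ sending the meridian of $L$ to that of $L'$, hence $z$ to an element of $\{z,\,z\cdot w\}$, but identifying it with the specific automorphism of part~(4) requires exactly the computation you are trying to avoid. Note, however, that this gap is harmless for the ``in particular'' clause: whichever of $z$ or $z\cdot w$ the meridian of $L'$ turns out to be, an automorphism of $G$ (the identity or the one from part~(4)) carries $\langle z\rangle$ onto it, so $G/\langle z\rangle\cong G/\langle\text{meridian of }L'\rangle$ either way. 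To pin down the first sentence of the corollary you still need the direct computation, which is what the paper does.
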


\begin{proof}
Starting from the Zariski-van Kampen presentation~\eqref{eq-presentation}, and using the blow-up blow-down 
process described in \autoref{fig:blowup}, it is straightforward that a meridian of $L=L_+$ (resp. $E_3=L'=L_-$)
is given by $e^2\cdot(g_2\cdot g_1)$ (resp. $e^2\cdot(g_4\cdot g_3)$), where $e=(g_4\cdot\ldots\cdot g_1)^{-1}$.
The result follows from tracing these meridians along the steps described in \autoref{thm-prin}.
\end{proof}

\autoref{cor-prin} answers negatively a question in~\cite{Oka-Eyral-fundamental-groups}.
In the following section this curve will be used to construct arithmetic Zariski pairs that are complement equivalent.

\section{Zariski pairs and braid monodromy factorizations}
\label{sec-invariant}

In \autoref{cor-prin}, we have proved that the fundamental groups of the Eyral-Oka curves are isomorphic, and hence
this invariant cannot be used to decide if these two curves, which are not rigidly equivalent, form an arithmetic 
Zariski pair.

Degtyarev~\cite{degt:08} proved that any two non-rigidly equivalent equisingular sextic curves with simple singularities 
cannot have \emph{regularly homeomorphic} embeddings, where a regular homeomorphism is a homeomophism that is 
holomorphic at the singular points.

In particular, by Degtyarev's result, Eyral-Oka curves are close to being an arithmetic Zariski pair.
Shimada was able to refine Degtyarev's arguments in~\cite{shi:08,shi:09} and developed an $N_C$-invariant that 
was able to exhibit that some of these candidates to Zariski pairs were in fact so. Unfortunately, the $N_C$-invariant 
coincides for the Galois-conjugate projective Eyral-Oka curves.

As we showed in \S\ref{sec-construction}, the curves $\mathcal{C}_\pm\cup L_\pm$ have homeomorphic complements
(even more, analytically isomorphic) via the birational morphism shown in~\autoref{fig:blowup}. The Cremona 
transformation that connects both complements is not a homeomorphism of the pairs 
$(\mathbb{P}^2,\mathcal{C}_\pm\cup L_\pm)$, so these curves are candidates to be complement equivalent Zariski pairs.

We are not able to decide on that problem, but we are apparently more succesful when adding more lines to the 
original curves $\cC_{\pm}\cup L_{\pm}$. More precisely, the curve 
$\mathcal{C}_\pm\cup L_\pm\cup L_{\pm}^2\cup L_{\pm}^{6,1}\cup L_{\pm}^{6,2}$ can be proved to be a
complement-equivalent arithmetic Zariski pair. Note that the extra lines correspond to the preimage of the 
discriminant in~$\PP^1$ (see~\autoref{fig:okaeyral}).

\subsection{Fibered curves and braid monodromy factorization}
\mbox{}

These curves are called \emph{fibered} (see~\cite{acc:01b}) since their complements induce a locally trivial fibration
on a finitely punctured $\PP^1$ (the complement of the discriminant). A fibered curve has a \emph{horizontal part} (the 
curve that intersects the generic fibers in a finite number of points) and a \emph{vertical part} (the preimage of the 
discriminant). As mentioned above, the braid monodromy of its horizontal part is a topological invariant of a 
fibered curve. 

Let us recall this result. Consider $\cC\subset\mathbb{P}^2$ a projective curve, $L\subset\mathbb{P}^2$ be a line
and $P\in L$. Let us assume that, if $P\in\cC$, then $L$ is the tangent cone of $\cC$. Consider $L_1,\dots,L_r$ the 
lines in the pencil through~$P$ (besides~$L$) which are non-transversal to~$\cC$.
The curve~$\cC^\varphi:=\cC\cup L\cup\bigcup_{j=1}^r L_j$ is the \emph{fibered curve} associated with $(\cC,L,P)$.

Consider now the braid monodromy factorization $(\tau_1,\dots,\tau_r)\in\mathbb{B}_d^r$ of the affine curve 
$\cC^{\text{aff}}:=\cC\setminus L$, with respect to the projection based at~$P$, where $d$ is the difference between 
$\deg\cC$ and the multiplicity of~$\cC$ at~$P$. We are ready to state the result.

\begin{thm}[{\cite[Theorem~1]{acc:01a}}]\label{thm-acc} Let us suppose the existence of a homeomorphism 
$\Phi:(\mathbb{P}^2,\cC_1^\varphi)\to(\mathbb{P}^2,\cC_2^\varphi)$ such that:
\begin{enumerate}
\enet{\rm(\arabic{enumi})}
\item The homeomorphism is orientation preserving on $\mathbb{P}^2$ and on the curves.
\item $\Phi(P_1)=P_2$, $\Phi(L_1)=L_2$.
\end{enumerate}
Then, the two triples $(\cC_1,L_1,P_1)$
and $(\cC_2,L_2,P_2)$ have equivalent braid monodromies,
\end{thm}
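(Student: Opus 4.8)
The plan is to reduce the statement to a comparison of monodromy data over a punctured disk, exploiting that a homeomorphism of pairs satisfying (1)--(2) must respect the pencil structure through $P_i$. First I would observe that the projection $\pi_i:\mathbb{P}^2\dashrightarrow\mathbb{P}^1$ centered at $P_i$ becomes, after blowing up $P_i$, an honest fibration $\tilde{\pi}_i:\widehat{\mathbb{P}^2_i}\to\mathbb{P}^1$ whose fibers are the lines of the pencil. Since $\Phi$ sends $P_1$ to $P_2$ and (when $P_i\in\cC_i$) the tangent cone $L_1$ to $L_2$, it lifts to a homeomorphism $\widehat{\Phi}:\widehat{\mathbb{P}^2_1}\to\widehat{\mathbb{P}^2_2}$; moreover $\Phi$ carries the curve $\cC_1^\varphi$ (horizontal part $\cC_1$ plus the vertical lines $L_1,L_1^1,\dots,L_1^r$) to $\cC_2^\varphi$, and because the vertical lines are intrinsically characterized as the non-generic fibers of the fibration together with the distinguished fiber $L_i$, $\widehat{\Phi}$ must permute the exceptional fiber/marked fiber appropriately and induce a homeomorphism $\phi:\mathbb{P}^1\to\mathbb{P}^1$ on the base taking the discriminant locus of $\pi_1$ (together with the point corresponding to $L_1$ and the point at infinity coming from the center) to that of $\pi_2$.

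The key step is then to identify the braid monodromy with the monodromy of the locally trivial fibration obtained by restricting $\tilde{\pi}_i$ over $\mathbb{P}^1$ minus the discriminant and the marked points, with fiber a $d$-times punctured line (the generic fiber of the pencil minus its intersection with $\cC_i$, after removing the two points on $L_i$ and on the exceptional curve). This monodromy is a homomorphism $\pi_1(\mathbb{P}^1\setminus\{\text{discriminant}\})\to\mathrm{MCG}$ of the punctured fiber, and the mapping class group of a $d$-punctured disk is exactly $\mathbb{B}_d$; choosing a pseudo-geometric basis as in \autoref{fig:geobase} packages this homomorphism as the braid monodromy factorization $(\tau_1,\dots,\tau_r)$. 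The homeomorphism $\widehat{\Phi}$, being fiber-preserving and orientation-preserving on $\mathbb{P}^2$ and on the curves (hence orientation-preserving on base and on fibers, and preserving the cyclic order of the punctures up to the allowed ambiguities), conjugates the monodromy homomorphism of $\tilde\pi_1$ to that of $\tilde\pi_2$; translating this conjugacy through the choice of bases yields precisely the equivalence of braid monodromy factorizations in the sense of Hurwitz moves, simultaneous conjugation, and the relabelling coming from $\phi$.

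The main obstacle I expect is the careful bookkeeping needed to pass from the abstract "conjugate monodromy homomorphisms" to the combinatorial notion of equivalence of braid monodromy factorizations: one must check that the orientation hypotheses (1) genuinely pin down that $\phi$ is orientation-preserving on $\mathbb{P}^1$ (so that the ordered factorization, not its reverse, is preserved) and that the cyclic ordering of the $d$ strands is respected up to the global $\Delta^2$-ambiguity, so that the resulting element of $\mathbb{B}_d$ is well defined up to the standard moves. A secondary subtlety is the treatment of the fiber $L_i$ itself: when $P_i\in\cC_i$ the hypothesis that $L_i$ is the tangent cone guarantees that $L_i$ meets the generic fiber structure correctly and that the count $d=\deg\cC_i-\operatorname{mult}_{P_i}\cC_i$ is the same on both sides, so $\widehat{\Phi}$ matches punctured fibers with punctured fibers; one should also note that the exceptional divisor of the blow-up of $P_i$ is the remaining distinguished fiber and is preserved by $\widehat\Phi$, which is what licenses removing it uniformly. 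Granting these orientation and marking compatibilities, the equivalence of the factorizations is then formal, and the theorem follows.
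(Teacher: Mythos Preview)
The paper does not supply its own proof of this theorem: it is quoted verbatim as \cite[Theorem~1]{acc:01a} and used as a black box. So there is no ``paper's proof'' to compare against here; what one can do is assess whether your sketch would actually establish the cited result.

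There is a genuine gap in your argument. You assert that the lift $\widehat{\Phi}$ is fiber-preserving and therefore induces a homeomorphism $\phi$ on the base $\mathbb{P}^1$, but this is precisely the non-trivial content of the theorem and you have not justified it. The hypothesis only tells you that $\Phi$ sends the \emph{finitely many} non-transversal vertical lines (and $L_1$) to the corresponding lines on the other side; it says nothing about the generic fibers of the pencil. A homeomorphism of pairs $(\mathbb{P}^2,\cC_1^\varphi)\to(\mathbb{P}^2,\cC_2^\varphi)$ can in principle scramble the generic fibers arbitrarily while still respecting the curve $\cC_i^\varphi$ set-theoretically, so there is no a priori induced map on $\mathbb{P}^1$ and your identification of the two monodromy homomorphisms by ``conjugation via $\widehat\Phi$'' does not go through as stated.

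The actual proof in \cite{acc:01a} works hard at exactly this point: one must show that the homeomorphism can be isotoped (relative to the curve) to a fiber-preserving one. This uses that the vertical part of $\cC_i^\varphi$ consists of disjoint embedded spheres of self-intersection~$0$ in the blown-up surface, together with mapping-class-group arguments to straighten $\widehat\Phi$ fiber by fiber. Your ``main obstacle'' paragraph correctly anticipates bookkeeping about orientations and orderings, but that is the easy part; the missing idea is the isotopy to a fibered homeomorphism, and without it the argument does not close.
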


To understand the statement, let us recall the notion of equivalence of braid monodromies.
Let $(\tau_1,\dots,\tau_r)\in\mathbb{B}_d^r$ be a braid monodromy factorization; for its construction
we have identified the Artin braid group $\mathbb{B}_d$ with the braid group based at some specific
$d$~points of~$\mathbb{C}$; two such identifications differ by conjugation, i.e.,
$$
(\tau_1,\dots,\tau_r)\sim(\tau_1^\tau,\dots,\tau_r^\tau),\quad\forall\tau\in\mathbb{B}_d.
$$
There is a second choice, the choice of a pseudo-geometric basis in~$\mathbb{F}_r$. Two such
bases differ but what is called a \emph{Hurwitz move}. The Hurwitz action of $\mathbb{B}_r$
on $G^r$ (where $G$ is an arbitrary group) is defined as follows. Let us denote by $s_1,\dots,s_{r-1}$
the Artin generators of $\mathbb{B}_r$ (we replace $\sigma$ by~$s$ to avoid confusion when
$G$ is a braid group). Then:
$$
(g_1,\dots,g_r)^{s_i}\mapsto(g_1,\dots,g_{i-1},g_{i+1},g_{i+1}\cdot g_i\cdot g_{i+1}^{-1},\dots,g_{i+2},\dots,g_r).
$$

\begin{dfn}
Two braid monodromies in $\mathbb{B}_d^r$ are \emph{equivalent} if they belong to the same orbit
by the action of~$\mathbb{B}_r\times\mathbb{B}_d$ described above.
\end{dfn}

Note that it is hopeless to apply directly \autoref{thm-acc} to our case: the braid monodromies are equal!
In~\cite{accm}, we refined \autoref{thm-acc} to work with \emph{ordered line arrangements}:
the classical braid groups were replaced everywhere by pure braid groups. We are going to state now
an intermediate refinement of \autoref{thm-acc}.

Let us think about our case. If we color in a different way the two first strands and the two
last strands, we take into account, that the first ones are the branches of the node in~$\Sigma_2$
which provides~$L_+$, while the last ones provide~$L_-$. Let us set that $(\tau_1,\tau_2,\tau_3)$
is the braid monodromy factorization for $\mathcal{C}_+$ with this coloring. To compare both
curves, the braid monodromy factorization of $\mathcal{C}_-$ would have the strands associated
to $L_-$ in the first place; this is accomplished, considering:
$$
(\tilde{\tau}_1,\tilde{\tau}_2,\tilde{\tau}_3):=
(\tau_1^\tau,\tau_2^\tau,\tau_3^\tau),\qquad \tau=(\sigma_2\cdot\sigma_3\cdot\sigma_1)^2
$$
since the braid $\tau$ exchanges the two pairs of strands.

\begin{dfn}
Let $A$ be a partition of the set $\{1,\dots,n\}$. The \emph{braid group} $\mathbb{B}(A)$ relative to $A$
is the subgroup of~$\mathbb{B}_n$ consisting of the braids that respect the given partition.
\end{dfn}

\begin{obs}
For instance, note that both the total and the discrete partition provide recognizable groups:
$\mathbb{B}_n=\mathbb{B}(\{\{1,\dots,n\}\})$ whereas $\mathbb{B}(\{\{1\},\dots,\{n\}\})$ provides the 
pure braid group.
\end{obs}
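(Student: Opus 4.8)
Since this is an observation whose content is essentially definitional, the plan is just to unwind what ``$\beta$ respects the partition $A$'' means and to reduce it to the standard short exact sequence $1\to P_n\to\mathbb{B}_n\to\mathfrak{S}_n\to 1$, in which $\pi\colon\mathbb{B}_n\to\mathfrak{S}_n$ sends a braid to the permutation it induces on the $n$ strand endpoints. A braid $\beta$ \emph{respects} $A=\{B_1,\dots,B_k\}$ precisely when $\pi(\beta)$ leaves each block $B_j$ invariant as a set; equivalently $\mathbb{B}(A)=\pi^{-1}(\mathfrak{S}(A))$, where $\mathfrak{S}(A)\leq\mathfrak{S}_n$ is the Young subgroup of permutations fixing every block setwise. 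This simultaneously shows $\mathbb{B}(A)$ is a subgroup and reduces the two assertions to computing $\mathfrak{S}(A)$ in the two extreme cases.

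First I would handle the one-block partition $A=\{\{1,\dots,n\}\}$: its unique block is all of $\{1,\dots,n\}$, which is fixed setwise by every permutation, so $\mathfrak{S}(A)=\mathfrak{S}_n$ and hence $\mathbb{B}(A)=\pi^{-1}(\mathfrak{S}_n)=\mathbb{B}_n$. Then for the discrete partition $A=\{\{1\},\dots,\{n\}\}$ I would observe that fixing the singleton $\{i\}$ setwise means $\pi(\beta)(i)=i$; imposing this for all $i$ gives $\pi(\beta)=\mathrm{id}$, so $\mathfrak{S}(A)=\{\mathrm{id}\}$ and therefore $\mathbb{B}(A)=\ker\pi=P_n$, which is the pure braid group on $n$ strands by definition. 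Conversely any pure braid visibly fixes every singleton, so the two inclusions match.

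The one point needing care — really the only ``obstacle'' — is the convention hidden in the word \emph{respect}: it must be read as ``fixes each block setwise'', not merely ``sends each block onto some block''. Under the weaker reading the discrete partition would again be preserved by every permutation (a singleton is always sent to a singleton) and $\mathbb{B}(A)$ would collapse to all of $\mathbb{B}_n$, which is not what is wanted. The stronger reading is exactly the one that makes $\mathbb{B}(A)$ the right ambient group for the later discussion of colored (ordered) line arrangements, where the individual colored lines are not allowed to be permuted with one another.
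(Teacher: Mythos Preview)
Your proposal is correct. The paper gives no proof of this observation at all: it is stated as an immediate consequence of the preceding definition, so your unwinding via the short exact sequence $1\to P_n\to\mathbb{B}_n\to\mathfrak{S}_n\to 1$ and the identification $\mathbb{B}(A)=\pi^{-1}(\mathfrak{S}(A))$ is exactly the right way to make the remark precise, and your clarification of the intended meaning of ``respects'' (fixes each block setwise, not merely sends blocks to blocks) is a useful point that the paper leaves implicit.
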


The proof of the following result follows along the same lines as that of~\cite[Theorem~1]{acc:01a}.

\begin{thm}\label{thm-ref} 
Let $A_d,A_r$ be partitions of $\{1,\dots,d\}$ and  $\{1,\dots,r\}$, respectively, such that $A_d$ induces partitions 
on $L_i\cap\cC_i$. Assume that there exists a homeomorphism 
$\Phi:(\mathbb{P}^2,\cC_1^\varphi)\to(\mathbb{P}^2,\cC_2^\varphi)$ satisfying the hypotheses in 
{\rm \autoref{thm-acc}}, and also satisfying:
\begin{enumerate}
\enet{\rm(\arabic{enumi})}
\item The blocks of lines through~$P_1,P_2$ associated to the partition are respected.
\item The partitions on $L_i\cap\cC_i$ are respected.
\end{enumerate}
Then, the triples $(\cC_1,L_1,P_1)$ and $(\cC_2,L_2,P_2)$ have braid monodromy factorizations
$(\tau_1^j,\dots,\tau_r^j)\in\mathbb{B}_d^r$, $j=1,2$ (respecting
the above partitions) which are equivalent
by the action of $\mathbb{B}(A_d)\times\mathbb{B}(A_r)$.
\end{thm}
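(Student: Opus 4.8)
The plan is to adapt the proof of~\cite[Theorem~1]{acc:01a} verbatim, keeping track of the extra structure imposed by the partitions $A_d$ and $A_r$. Recall that in the original argument one starts with the homeomorphism $\Phi$, uses condition (2) of~\autoref{thm-acc} to arrange that $\Phi$ sends the pencil of lines through $P_1$ to the pencil through $P_2$, and thereby produces a homeomorphism of the base $\PP^1$ of the two fibrations which carries the discriminant of the first curve to that of the second. Lifting this base homeomorphism and using that $\Phi$ is orientation preserving on $\PP^2$ and on the curves (condition (1)), one obtains that the braid monodromy factorization of $(\cC_2,L_2,P_2)$ is obtained from that of $(\cC_1,L_1,P_1)$ by (i) a change of pseudo-geometric basis in $\pi_1$ of the punctured base, which is a Hurwitz move, i.e.\ the action of some element of $\mathbb{B}_r$, and (ii) a change of the identification of the braid group on the reference fiber with $\mathbb{B}_d$, which is a global conjugation by some element of $\mathbb{B}_d$. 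This is exactly the statement that the two factorizations are equivalent in $\mathbb{B}_d^r$.

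The refinement I would make is to observe that, under hypotheses (1) and (2) of the present statement, both of these ambient braids can be chosen inside the subgroups respecting the prescribed partitions. For the $\mathbb{B}_r$-factor: hypothesis (1) says the homeomorphism permutes the blocks of non-transversal lines through $P_i$ according to $A_r$; since the Hurwitz move realizing the change of geometric basis is induced by exactly this permutation of the punctures of the base (up to the isotopy tracking which loops go around which punctures), it lies in $\mathbb{B}(A_r)$. For the $\mathbb{B}_d$-factor: hypothesis (2) together with the requirement that $A_d$ induces partitions on $L_i\cap\cC_i$ means that the identification of the reference fiber respects the decomposition of the $d$ intersection points into blocks; the two admissible identifications therefore differ by a braid preserving that block structure, i.e.\ an element of $\mathbb{B}(A_d)$. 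Assembling these, the two factorizations are carried into one another by an element of $\mathbb{B}(A_d)\times\mathbb{B}(A_r)$, as claimed.

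Concretely, the steps I would carry out in order are: first, reduce to a homeomorphism $\Phi$ that is fiber preserving for the two pencil fibrations (this is the standard isotopy argument from~\cite{acc:01a}, using that $\Phi(P_1)=P_2$ and $\Phi(L_1)=L_2$ so that the tangent-cone condition is preserved); second, extract the induced self-homeomorphism of the base $\PP^1$ and record that by hypothesis (1) it permutes the marked points respecting $A_r$; third, choose compatible pseudo-geometric bases and trivializations of the fibration over a disk, and identify the discrepancy between the transported factorization of $\cC_1$ and any chosen factorization of $\cC_2$ as a pair (Hurwitz move, conjugation); fourth, invoke hypothesis (2) and the $A_d$-compatibility of the intersection points to confine the conjugating braid to $\mathbb{B}(A_d)$; conclude.

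The main obstacle I anticipate is the bookkeeping in the third and fourth steps: one must be careful that the \emph{same} trivialization of the locally trivial fibration is used on both sides when comparing the fiber braid groups, so that the ``change of marking'' really is a single well-defined element of $\mathbb{B}_d$ rather than something that varies along the base. This is precisely the point where orientation preservation on the curves (condition (1) of~\autoref{thm-acc}) is used — it guarantees that the local half-twists attached to each singular fiber are matched with the correct sign — and it is also where the partition condition on $L_i\cap\cC_i$ must be checked to be genuinely preserved by the fiber-preserving $\Phi$ and not merely by the original $\Phi$. Since~\cite{acc:01a} already handles the unrefined version, the new content is entirely in verifying that every braid produced along the way can be taken in the relevant partition subgroup; I expect this to be routine but tedious, which is why~\autoref{thm-ref} is phrased as an ``intermediate refinement'' whose proof ``follows along the same lines''.
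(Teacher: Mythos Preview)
Your proposal is consistent with the paper's approach: the paper gives no proof at all for this theorem, stating only that ``the proof of the following result follows along the same lines as that of~\cite[Theorem~1]{acc:01a}.'' Your outline is precisely an elaboration of that remark---you retrace the argument of~\cite{acc:01a} and check that the Hurwitz move on the base lands in $\mathbb{B}(A_r)$ (from hypothesis~(1)) and the conjugating braid on the fiber lands in $\mathbb{B}(A_d)$ (from hypothesis~(2))---so there is nothing to compare.
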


\begin{thm}
\label{thm-Zar-pair}
There is no homeomorphism
$$
(\mathbb{P}^2,\mathcal{C}_+\cup L_+\cup L_{+}^2\cup L_{+}^{6,1}\cup L_{+}^{6,2})\to
(\mathbb{P}^2,\mathcal{C}_-\cup L_-\cup L_{-}^2\cup L_{-}^{6,1}\cup L_{-}^{6,2}).
$$
\end{thm}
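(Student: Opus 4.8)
The plan is to derive a contradiction from the existence of such a homeomorphism by applying the refined braid-monodromy invariant of \autoref{thm-ref}. First I would observe that both curves $\mathcal{C}_\pm\cup L_\pm\cup L_\pm^2\cup L_\pm^{6,1}\cup L_\pm^{6,2}$ are precisely the fibered curves $\mathcal{C}_\pm^\varphi$ associated to the triple $(\mathcal{C}_\pm,L_\pm,Q_\pm)$, where $Q_\pm$ is the $\mathbb{A}_5$-point and the three vertical lines $L_\pm^2,L_\pm^{6,1},L_\pm^{6,2}$ are the preimage of the discriminant described in \autoref{fig:okaeyral}. By \autoref{prop-iso} the complements are (even analytically) isomorphic, so the only thing at stake is whether a homeomorphism of pairs exists. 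Any putative homeomorphism $\Phi$ must send the line $L$ in the pencil to a line in the pencil, and $\mathcal{C}$ to $\mathcal{C}$; since $L_+$ and $L_-$ play the distinguished role of the tangent cone at the $\mathbb{A}_5$-point, after checking it preserves orientation (which can be arranged, as the curves are real) one reduces to the situation of \autoref{thm-acc} and \autoref{thm-ref}.

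Next I would set up the coloring. The degree-$4$ horizontal monodromy acts on four strands; as explained in the paragraph preceding the Definition of $\mathbb{B}(A)$, the two strands near infinity that produce $L_+$ (the complex-conjugate branches of $\mathcal{C}_+$ at $P_+$) and the two that produce $L_-$ (the real branches) should be colored by the partition $A_d=\{\{1,2\},\{3,4\}\}$, while on the base we take $A_r$ to be the partition of $\{1,2,3\}$ separating the fiber containing the node giving $L_\pm$ from the other discriminant fibers — concretely, since the two ``extra'' vertical lines $L^{6,1},L^{6,2}$ come from the $\mathbb{E}_6$-points and $L^2$ from the $\mathbb{A}_2$-point, one checks from \autoref{fig:okaeyral} which of $x=-1,0,1$ plays which role and colors accordingly. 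Then \autoref{thm-ref} says: if $\Phi$ exists respecting these blocks, the braid monodromy factorization $(\tau_1,\tau_2,\tau_3)$ of \autoref{prop-braid-monod} for $\mathcal{C}_+$ and the factorization $(\tilde\tau_1,\tilde\tau_2,\tilde\tau_3)=(\tau_1^\tau,\tau_2^\tau,\tau_3^\tau)$ for $\mathcal{C}_-$, with $\tau=(\sigma_2\sigma_3\sigma_1)^2$ exchanging the two colored pairs, must lie in the same orbit of the action of $\mathbb{B}(A_d)\times\mathbb{B}(A_r)$ on $\mathbb{B}_4^3$.

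The heart of the proof is therefore a \emph{negative} statement about orbits: the two explicit triples of braids in $\mathbb{B}_4^3$ are \emph{not} equivalent under $\mathbb{B}(A_d)\times\mathbb{B}(A_r)$. The obstruction is engineered to be computable: $\mathbb{B}(A_r)$ is finite (it is a finite-index — in fact, for $r=3$ a small — subgroup, once we also fix which fibers are interchangeable) so the $\mathbb{B}(A_r)$-part of the orbit is a finite union of Hurwitz-translates, and conjugation by $\mathbb{B}(A_d)$ is handled by passing to a $\mathbb{B}(A_d)$-invariant. The natural choice of invariant is the induced permutation/linking data together with the image of the whole factorization in an appropriate quotient — e.g.\ the abelianized coloured braid, or better, the isomorphism type of the \emph{pair} (fundamental group of the complement, peripheral structure given by the coloured meridians of $L$ versus $L^2,L^{6,i}$). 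Here I would invoke \autoref{cor-prin} and \autoref{thm-prin}: the meridian of $L_+$ is the central element $z$, and of $L_-$ is $z\cdot w$, and although there is an automorphism of $G$ swapping $z$ and $z\cdot w$, that automorphism does \emph{not} respect the additional partition data — i.e.\ it does not fix the conjugacy classes of the meridians of the extra lines $L^{6,1},L^{6,2},L^2$ simultaneously with the required block structure. Concretely one computes, for each of the finitely many candidate elements of $\mathbb{B}(A_r)$, the induced tuple and checks (with \texttt{Sagemath}/\texttt{GAP4}, as in the proof of \autoref{thm-prin}) that no $\mathbb{B}(A_d)$-conjugation carries $(\tau_1,\tau_2,\tau_3)$ to it; equivalently that the coloured peripheral system of $\mathcal{C}_+^\varphi$ is not isomorphic to that of $\mathcal{C}_-^\varphi$.

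The main obstacle is precisely this last finite but delicate verification: one must produce an invariant that (a) is genuinely preserved by the full group $\mathbb{B}(A_d)\times\mathbb{B}(A_r)$ allowed by \autoref{thm-ref}, not merely by the pure braid group, and (b) still separates the two factorizations despite the symmetry $z\leftrightarrow z\cdot w$ of the bare fundamental group. I expect the right invariant to be built from the action of $G/G'\cong\mathbb{Z}$ lifted to the characteristic varieties or from a carefully chosen finite quotient of $G$ on which the coloured meridians of $L^{6,1},L^{6,2}$ have distinguishable images in the $+$ and $-$ cases; unwinding \autoref{thm-prin} (the semidirect structure $G'=K\rtimes V$ and the $g$-action) should make $w$ detectable from the peripheral data once the extra lines are remembered, which is exactly what fails for the curves without the extra lines and is what upgrades the ``candidate'' into an honest complement-equivalent arithmetic Zariski pair.
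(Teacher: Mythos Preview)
Your reduction is correct and matches the paper: both arrangements are the fibered curves for $(\mathcal{C}_\pm,L_\pm,Q_\pm)$, a putative homeomorphism must send $L_+\mapsto L_-$, $L_+^2\mapsto L_-^2$, $\{L_+^{6,1},L_+^{6,2}\}\mapsto\{L_-^{6,1},L_-^{6,2}\}$ and respect the two branches at the $\mathbb{A}_5$-point, and hence by \autoref{thm-ref} the factorizations $T=(\tau_1,\tau_2,\tau_3)$ and $\tilde T=(\tau_1^\tau,\tau_2^\tau,\tau_3^\tau)$ with $\tau=(\sigma_2\sigma_3\sigma_1)^2$ would have to lie in the same $\mathbb{B}(A_d)\times\mathbb{B}(A_r)$-orbit, with $A_d=\{\{1,2\},\{3,4\}\}$ and $A_r=\{\{1,3\},\{2\}\}$.

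Where your argument breaks down is the final verification. First, a factual slip: $\mathbb{B}(A_r)$ is \emph{not} finite --- it has finite index in $\mathbb{B}_3$, but $\mathbb{B}_3$ is infinite, so you cannot ``compute, for each of the finitely many candidate elements of $\mathbb{B}(A_r)$'' anything at all; Hurwitz orbits in $\mathbb{B}_4^3$ are a priori infinite. Second, your proposed obstruction via the peripheral structure of $G$ is not a proof but a programme (``I expect'', ``should make $w$ detectable''). Note also that $G$ from \autoref{thm-prin} is $\pi_1(\mathbb{P}^2\setminus(\mathcal{C}\cup L))$, not $\pi_1(\mathbb{P}^2\setminus\mathcal{C}^\varphi)$; the meridians of $L^{6,i},L^2$ live in a different group, so invoking $z,z\cdot w\in G$ does not directly give a coloured peripheral invariant of the fibered curve. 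You never actually produce an invariant and check it.

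The paper's resolution of this step is different and more direct: instead of working in $G$, it pushes the whole orbit problem into a \emph{finite} quotient of $\mathbb{B}_4$. Concretely it specializes the reduced Burau representation at $t\equiv -1$ over $\mathbb{Z}/4$, getting $\varphi:\mathbb{B}_4\to F\subset\mathrm{GL}(3,\mathbb{Z}/4)$ with $|F|=768$. Setting $F_A=\varphi(\mathbb{B}(A_d))$, the Hurwitz action of $\mathbb{B}(A_r)$ on the finite set $F^3/F_A$ has finite orbits; one then computes (in \texttt{Sagemath}/\texttt{GAP4}) that the orbit of $[\varphi(T)]$ has exactly six elements and that $[\varphi(\tilde T)]$ is not among them. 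Passing to a finite quotient of the \emph{braid group} is precisely the missing idea that converts the infinite orbit problem into a finite, checkable one.
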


\begin{proof}
Let us assume that such a homeomorphism $\Phi$ exists. From the topological properties
of~$\mathbb{P}^2$, it must respect the orientation of~$\mathbb{P}^2$. The intersection form
in~$\mathbb{P}^2$ implies that either respect all the orientations on the curves, or reversed
all of them. Since the equations are real, in the latter case we can compose~$\Phi$ with
complex conjugation, and we may assume that $\Phi$ respects all the orientations.

By the local topology of the curves, $\Phi(\mathcal{C}_+)=\mathcal{C}_-$; it is also easy
to check that $\Phi(L_+)=L_-$ and $\Phi(L_+^{2})=L_-^{2}$ (the lines joining the points
of type $\mathbb{A}_5$ and $\mathbb{A}_2$). In particular, $\Phi(P_+)=P_-$. Note also
that $\Phi(L_{+}^{6,1}\cup L_{+}^{6,2})=L_{-}^{6,1}\cup L_{-}^{6,2}$ (the lines 
joining the $\mathbb{A}_5$ point and the two $\mathbb{E}_6$ points).

Moreover, the homeomorphism must respect the two branches of the $\mathbb{A}_5$ point
and, hence, the two other points in~$\mathcal{C}_{\pm}\cap L_\pm$ (globally). 

Let us consider the partition $A_4^d=\{\{1,2\},\{3,4\}\}$ for the strands of the braids.
In the base, we consider the partition $A_3^r=\{\{1,3\},\{2\}\}$. 
Then, from \autoref{thm-ref}, the braid monodromies
$T:=(\tau_1,\tau_2,\tau_3)$ and $\tilde{T}:=(\tilde{\tau}_1,\tilde{\tau}_2,\tilde{\tau}_3)$
are equivalent under the action of $\mathbb{B}(A_4^d)\times\mathbb{B}(A_3^r)$.

We are going to show that this does not happen and, in particular, the expected
homeomorphism does not exist. 

There is no algorithm ensuring that two braid monodromy factorizations are equivalent.
In order to look for necessary conditions, we consider a finite representation
$\varphi:\mathbb{B}_4\to F$, where $F$ is a finite group. We need to check
if $\varphi(T)$ and $\varphi(\tilde{T})$ are equivalent under the action
of $\varphi(\mathbb{B}(A_4^d))\times\mathbb{B}(A_3^r)$. Since the orbits
are finite, this approach should lead to an answer.

Let us denote $F_A:=\varphi(\mathbb{B}(A_4^d))$; let $\hat{F}:=F^3/F_A$, i.e., the quotient
of the cartesian product $F^3$ under the diagonal conjugation action of $F_A$. The 
group $\mathbb{B}(A_3^r)$ acts by Hurwitz moves on it. We want to check if the classes
$[T],[\tilde{T}]\in\hat{F}$ are in the same orbit under this action. Note that in general,
this can be computationally expensive. 

There is a natural way to obtain representations of the braid group. Consider the
reduced Burau representation $\phi:\mathbb{B}_4\to\GL(3,\mathbb{Z}[t^{\pm 1}])$.
Let $R$ be either $\mathbb{Z}/m$, for some $m\in\mathbb{N}$, or $\mathbf{F}_q$, $q$~some prime power.
Let $s$ be a unit in~$R$; then we define
$$
\varphi:\mathbb{B}_4\to F:=\varphi(\mathbb{B}_4)\subset GL(3,R)
$$
by considering the natural map $\mathbb{Z}\to R$ and specializing $t$ to~$s$.
Let us do it for $R=\mathbb{Z}/4$ and $s\equiv -1\bmod 4$.
We have:
\iftikziii
\[
\begin{tikzcd}[row sep=.0]
|[left]|\mathbb{B}_4\arrow[r,"\varphi"]&|[right]|F\subset\GL(3,\mathbb{Z}/4)\\
|[left]|\sigma_1\arrow[r, mapsto]&|[right]| 
\left(\begin{smallmatrix}
1 & 0 & 0 \\
1 & 1 & 0 \\
1 & 0 & 1
\end{smallmatrix}\right)\\
|[left]|\sigma_2\arrow[r, mapsto]&|[right]| 
\left(\begin{smallmatrix}
2 & 3 & 0 \\
1 & 0 & 0 \\
0 & 0 & 1
\end{smallmatrix}\right)\\
|[left]|\sigma_3\arrow[r, mapsto]&|[right]| 
\left(\begin{smallmatrix}
1 & 0 & 0 \\
0 & 2 & 3 \\
0 & 1 & 0
\end{smallmatrix}\right),
\end{tikzcd}
\]
\else
\begin{center}
\includegraphics{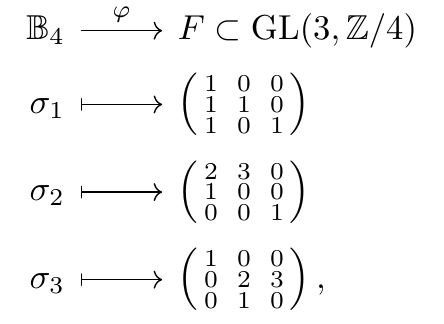}
\end{center}
\fi
where $F$ is a finite group of order~$768$. The classes of $\tilde{T}$ is
$$
[\tilde{T}]=
\left[\left(\begin{smallmatrix}
 2 & 2 & 3 \\
 0 & 3 & 1 \\
 3 & 0 & 3
 \end{smallmatrix}\right), \left(\begin{smallmatrix}
 3 & 3 & 0 \\
 0 & 1 & 3 \\
 2 & 1 & 0
 \end{smallmatrix}\right), \left(\begin{smallmatrix}
 3 & 3 & 2 \\
 3 & 0 & 2 \\
 1 & 0 & 1
 \end{smallmatrix}\right)\right]
$$
and the orbit of $[T]$ is
\begin{gather*}
\left[\left(\begin{smallmatrix}
 0 & 3 & 2 \\
 0 & 2 & 3 \\
 1 & 2 & 2
 \end{smallmatrix}\right), \left(\begin{smallmatrix}
 3 & 0 & 3 \\
 1 & 3 & 0 \\
 3 & 2 & 2
\end{smallmatrix}\right), \left(\begin{smallmatrix}
 1 & 0 & 1 \\
 0 & 2 & 1 \\
 0 & 1 & 1
\end{smallmatrix}\right)
 \right],
 \left[\left(\begin{smallmatrix}
  3 & 0 & 3 \\
  1 & 3 & 0 \\
  3 & 2 & 2
  \end{smallmatrix}\right), \left(\begin{smallmatrix}
  0 & 1 & 2 \\
  1 & 3 & 2 \\
  2 & 1 & 1
  \end{smallmatrix}\right), \left(\begin{smallmatrix}
  1 & 0 & 1 \\
  0 & 2 & 1 \\
  0 & 1 & 1
  \end{smallmatrix}\right)\right], \left[\left(\begin{smallmatrix}
  0 & 3 & 2 \\
  0 & 2 & 3 \\
  1 & 2 & 2
  \end{smallmatrix}\right), 
  \left(\begin{smallmatrix}
  1 & 0 & 1 \\
  0 & 2 & 1 \\
  0 & 1 & 1
  \end{smallmatrix}\right), \left(\begin{smallmatrix}
  2 & 3 & 0 \\
  1 & 3 & 2 \\
  0 & 3 & 3
  \end{smallmatrix}\right)\right],\\ \left[\left(\begin{smallmatrix}
  0 & 1 & 2 \\
  1 & 3 & 2 \\
  2 & 1 & 1
  \end{smallmatrix}\right), \left(\begin{smallmatrix}
  0 & 3 & 2 \\
  0 & 2 & 3 \\
  1 & 2 & 2
  \end{smallmatrix}\right), \left(\begin{smallmatrix}
  1 & 0 & 1 \\
  0 & 2 & 1 \\
  0 & 1 & 1
  \end{smallmatrix}\right)\right], \left[\left(\begin{smallmatrix}
  1 & 0 & 1 \\
  0 & 2 & 1 \\
  0 & 1 & 1
  \end{smallmatrix}\right), \left(\begin{smallmatrix}
  1 & 2 & 1 \\
  1 & 3 & 0 \\
  1 & 0 & 0
  \end{smallmatrix}\right), \left(\begin{smallmatrix}
  2 & 3 & 0 \\
  1 & 3 & 2 \\
  0 & 3 & 3
  \end{smallmatrix}\right)\right], \left[\left(\begin{smallmatrix}
  0 & 1 & 2 \\
  1 & 3 & 2 \\
  2 & 1 & 1
  \end{smallmatrix}\right), \left(\begin{smallmatrix}
  1 & 0 & 1 \\
  0 & 2 & 1 \\
  0 & 1 & 1
  \end{smallmatrix}\right), \left(\begin{smallmatrix}
  1 & 2 & 1 \\
  1 & 3 & 0 \\
  1 & 0 & 0
  \end{smallmatrix}\right)\right]\end{gather*}
It is easily checked that $\tilde{T}$ is not conjugate to any element of the orbit of~$T$.

The group $\mathbb{B}(A_3^r)$ is generated by $s_1^{-2}, s_2^{-2}, s_1\cdot s_2\cdot s_1^{-1}$;
they induce the following permutations in the orbit of~$T$:
$$
[(1,2,4)(3,5,6), (1,3,5)(2,4,6), (1,3,4)(2,5,6)],
$$
showing that it is actually an orbit. Since we have shown that the braid monodromies are
not conjugate, we deduce that no homeomorphism exists. The computations
have been done with \texttt{Sagemath}~\cite{sage66}
and \texttt{GAP4}~\cite{GAP4}.
\end{proof}


\subsection{Last comments}
\mbox{}

Eyral-Oka curves give rise to other arithmetic Zariski pairs, namely using the projections from the singular 
points of type $\mathbb{E}_6$ and $\mathbb{A}_2$. When projecting from a point of type $\mathbb{E}_6$ it does 
not matter which one because of the symmetry of the curves which exchanges both points. 
%
%
One can compute the braid monodromy factorizations using again the fact that they are strongly real curves. 
In these cases, it is more involved to prove that the braid monodromy factorizations are not equivalent. 
In a future work we will use the computed representations to distinguish the braid monodromies using diagonal 
representations.


\providecommand{\bysame}{\leavevmode\hbox to3em{\hrulefill}\thinspace}
\providecommand{\MR}{\relax\ifhmode\unskip\space\fi MR }
\providecommand{\MRhref}[2]{%
  \href{http://www.ams.org/mathscinet-getitem?mr=#1}{#2}
}
\providecommand{\href}[2]{#2}

\end{document}